\newtheorem{theorem}{Theorem}
\newtheorem{lemma}{Lemma}
\newtheorem{proposition}{Proposition}
\newtheorem{corollary}{Corollary}
\newcommand{\deriv}[2]{\frac{d#1}{d#2}}
\newcommand{\pderiv}[2]{\frac{{\partial}#1}{{\partial}#2}}
\newcommand{\Gxx}[0]{G}
\newcommand{\pcos}[0]{P}
\newcommand{\ppr}[0]{P'}
\newcommand{\dpr}[0]{D}
\newcommand{\Gnx}[0]{G_{-}}
\newcommand{\Gpx}[0]{G_{+}}
\newcommand{\Gxn}[0]{G^{-}}
\newcommand{\Gxp}[0]{G^{+}}
\newcommand{\mapderiv}[1]{F'_{#1}}			
\newcommand{\mapuderiv}[1]{B'_{#1}}			
\newcommand{\flow}[1]{\varphi^{#1}}
\newcommand{\figCond}[0]{1}
\newcommand{\figParam}[0]{2}
\newcommand{\figdBdr}[0]{3}
\newcommand{\figWave}[0]{4}
\newcommand{\figlfixed}[0]{5}
\newcommand{\figlzero}[0]{6}
\newcommand{\keyterm}[1]{\textit{#1}}
\newcommand{\cvec}[2]{\begin{bmatrix} #1 \\ #2 \end{bmatrix}}
\newcommand{\rvec}[2]{\begin{bmatrix} #1 & #2 \end{bmatrix}}
\newcommand{\mat}[4]{\begin{bmatrix} #1 & #2 \\ #3 & #4 \end{bmatrix}}
\newcommand{\makenest}[3]{\newcommand{#1}[1]{\ensuremath{\left#2##1\right#3}}}
\makenest{\bracket}[]
\makenest{\clop}[)
\makenest{\opcl}(]
\newcommand{\R}{\mathbb{R}}
\newcommand{\Z}{\mathbb{Z}}
\newcommand{\eps}{\varepsilon}
\begin{document}

\title{On the Density of Dispersing Billiard Systems with Singular Periodic Orbits}
\author{Otto Vaughn Osterman}
\date{October 22, 2019}

\maketitle

\begin{abstract}
Dynamical billiards, or the behavior of a particle traveling in a planar region $D$ undergoing elastic collisions with the boundary, has been extensively studied and is used to model many physical phenomena such as a Boltzmann gas.
Of particular interest are the dispersing billiards, where $D$ consists of a union of finitely many open convex regions. These billiard flows are known to be ergodic and to possess the $K$-property.
However, Turaev and Rom-Kedar (1998) proved that for dispersing systems permitting singular periodic orbits, there exists a family of smooth Hamiltonian flows with regions of stability near such orbits, converging to the billiard flow. They conjecture that systems possessing such singular periodic orbits are dense in the space of all dispersing billiard systems and remark that if this conjecture is true then every dispersing billiard system is arbitrarily close to a non-ergodic smooth Hamiltonian flow with regions of stability.
In this paper, we consider billiard tables consisting of the complement to a union of open unit disks with disjoint closures. We present a partial solution to this conjecture by showing that if the system possesses a near-singular periodic orbit satisfying certain conditions, then it can be perturbed to a system that permits a singular periodic orbit. We comment on the assumptions of our theorem that must be removed to prove the conjecture of Turaev and Rom-Kedar for these systems.
\end{abstract}


\section{Introduction}

The problem of modeling the behavior of a particle traveling in a closed region $D$ (the \keyterm{billiard table}) at a constant speed undergoing elastic collisions with its boundary is known as \keyterm{dynamical billiards}. This has been extensively studied and is used to model many physical systems such as the dynamics of gas particles. In addition, many problems can be reduced to billiard problems in higher dimensions such as the motion of $N$ rigid spheres in a box (Sinai and Chernov 1987).

Since the speed of an orbit within a billiard is constant, we consider the phase space as consisting of points $ (p,v) \in D \times S^1 $, with position $p$ and velocity $v$.
The billiard flow $ \flow{t} : D \times S^1 \rightarrow D \times S^1 $ returns the state of a particle after time $t$ as a function of its initial state.
We refer to the set of all phase values $(p,v)$ assumed from a given initial condition $(p_0,v_0)$ as an \keyterm{orbit}, and the path of a particle, or the set of all positions $p$ in an orbit, as a \keyterm{trajectory}. By a \keyterm{segment} of a trajectory, we mean a straight-line portion between two collision points.

Of special interest in dynamical billiards are the \keyterm{dispersing billiards}, where the billiard table $D$ is the complement of a union of finitely many open convex regions. It is well-known that dispersing billiards are \keyterm{ergodic} and possess the \keyterm{$K$-property} (Bunimovich and Sinai 1987, Sinai 1970), which is associated with strong statistical properties of the state of the system over time.

Due to its use in modeling gas particles and the strong statistical properties of $K$-systems, dispersing billiards have been a key model to study the main assumption of statistical mechanics, the Boltzmann ergodic hypothesis, which is concerned with the statistical behavior of large-scale systems of gas particles (Sz\'{a}sz 2000).

A \keyterm{Hamiltonian system} is a dynamical system governed by the equations
\begin{equation} \label{hamiltonian}
    q'(t) = \pderiv{H}{p}, \qquad
    p'(t) = -\pderiv{H}{q},
\end{equation}
where $ p, q \in \R^n $. Hamiltonian systems represent a general class of physical systems, where $q$ is the \keyterm{generalized position} and $p$ the \keyterm{generalized momentum}. The \keyterm{Hamiltonian} of the system $H(q,p,t)$ generally represents its total energy.
In \keyterm{automonous} Hamiltonian systems, where the Hamiltonian is independent of $t$, the Hamiltonian is a conserved quantity.

The billiard flow can be described as an autonomous Hamiltonian system with position $p$, momentum $v$, and Hamiltonian
\begin{equation}
    H(p,v) = \frac{1}{2} \norm{v}^2 + V(p),
\end{equation}
where the \keyterm{Hamiltonian potential} is
\begin{equation} \label{bPotential}
    V(p) = \begin{cases} 0, & p \in D \\ \infty, & p \notin D \end{cases}
\end{equation}
(Turaev and Rom-Kedar 1998). Note that the Hamiltonian in this case is not continuous on the boundary of $D$, which causes discontinuities in the velocity at collision points. However, in most physical applications, the Hamiltonian potential is smooth. Therefore, the billiard flow serves only an idealized model, and approximations by smooth potential functions would more accurately reflect most physical systems.

Turaev and Rom-Kedar (1998) consider perturbations of the billiard flow by replacing the billiard potential with smooth approximations. This produces smooth Hamiltonian flows that are continuous or smooth everywhere the billiard flow is continuous or smooth, respectively.

It is natural to ask what properties of dispersing billiard systems are preserved when they are purturbed in this manner. Rapoport et al. (2005) consider families of $C^r$ smooth Hamiltonian potentials whose limit is the non-smooth billiard potential. They prove that the corresponding Hamiltonian flows for time $T$ indeed converge to the billiard flow in the $C^r$ sense, provided there are finitely many collisions in the time interval $[0,T]$, all of which are non-singular.

Other types of perturbations of the billiard potential have also been studied (Baldwin 1988, Knauf 1989, Kubo 1976). For example, Baldwin (1988) introduced the concept of a \keyterm{soft billiard system}, with potential
\begin{equation} \label{softPotential}
    V(p) = \begin{cases} 0, & p \in D \\ U, & p \notin D, \end{cases}
\end{equation}
where $U$ is a constant. These flows approach the true billiard flow as $ U \rightarrow \frac{1}{2} $.

Turaev and Rom-Kedar (1998) address the ergodicity property of smooth Hamiltonian approximations by considering a family of such flows that converge to the billiard flow. Of particular importance in their analysis are \keyterm{singular periodic orbits}, or periodic orbits containing a collision with angle of incidence $ \pm \frac{\pi}{2} $ (such a collision is called \keyterm{grazing}).
Specifically, they prove that in dispersing billiard systems with a singular periodic orbit, there exists a family of smooth Hamiltonian flows converging to the billiard flow, all of which contain regions of stability near the singular periodic orbit.
As a consequence, such dispersing billiard systems lose ergodicity with these perturbations, as well as the strong statistical properties associated with $K$-systems.

Turaev and Rom-Kedar (1998) conjecture that systems possessing such singular periodic orbits are dense in the set of all dispersing billiards. In another paper (2012), they further investigated this conjecture and provided supporting numerical experiments. If this is true, it would imply that every dispersing billiard in the plane is arbitrarily close to a smooth Hamiltonian flow with regions of stability.
In this paper, we address this conjecture by proving that it is indeed true for a special case of a near-singular periodic orbit.

It is well-known that the set of periodic orbits in a dispersing billiard is dense in the phase space. Therefore, for any $ \delta > 0 $, there is a periodic orbit such that one segment of the trajectory passes a distance of less than $\delta$ from the boundary of a scatter (which we call scatterer $0$) without colliding with it. Therefore, to prove the conjecture, it is sufficient to show that for every $ \eps > 0 $, there exists $ \delta > 0 $ depending only on the system and $ \eps $ such that scatterer $0$ can be moved a distance of at most $\eps$, while allowing the periodic orbit to be perturbed continuously in a way so that either the trajectory eventually reaches scatterer $0$ with a grazing collision or a singularity occurs elsewhere in the trajectory.
We present a partial solution to this problem by proving the following:

\begin{theorem} \label{mainTh}
    Consider any billiard system in the plane $\R^2$ or the flat torus $\R^2/\Z^2$, where the billiard table $D$ consists of the complement of a union of finitely many open unit disks with disjoint closures, and any periodic orbit that satisfies the following two conditions:
    \begin{itemize}
        \item One segment of the trajectory passes a distance $ R < \delta $ from the boundary of one scatterer, which we refer to as scatterer $0$.
        \item There is exactly one collision with the boundary of scatterer $0$ per period. If $\alpha_0$ is angle of incidence of this collision, then $ \abs{\alpha_0} < \frac{\pi}{6} $.
    \end{itemize}
    Then, for any $ \eps > 0 $, there exist $ \delta > 0 $ depending only on the location of the scatterers and $ \alpha_0 $, such that for any periodic orbit introduced above, it is possible to perturb the billiard by moving scatterer $0$ a distance of at most $ \eps $ while perturbing the periodic trajectory continuously, so that the periodic orbit becomes singular.
\end{theorem}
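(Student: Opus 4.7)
My plan is to parameterize the perturbation by a single real parameter $s \in [0, \eps]$, corresponding to moving the center of scatterer $0$ a distance $s$ in the direction $\vec{n}$ perpendicular to the near-grazing segment $\ell$ and pointing toward $\ell$. I would continue the periodic orbit continuously in $s$ and track the distance $R(s)$ between the perturbed near-grazing segment $\ell(s)$ and the boundary of the displaced scatterer $0$. If I can show $R'(s) \leq -c$ for a constant $c > 0$ depending only on the billiard table and $\alpha_0$, then $R(s^*) = 0$ at some $s^* \leq R/c$, and setting $\delta := c\eps$ completes the proof.

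For the continuation, I would use the implicit function theorem at the fixed point of the Poincar\'e return map of the orbit: dispersing billiards are uniformly hyperbolic, so the Jacobian of $P^n - \mathrm{id}$ is invertible at the periodic orbit, and the orbit varies smoothly as scatterer $0$ moves. The combinatorial type of the orbit is preserved as long as no other collision becomes grazing; the conditions $R < \delta$ and $|\alpha_0| < \pi/6$, together with the fixed distances between the scatterers, ensure this for $s \in [0, \eps]$.

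The derivative computation is the heart of the proof. Writing $R(s) + 1 = \mathrm{dist}(X_0(s), \mathrm{line}(\ell(s)))$ and differentiating decomposes $R'(s)$ into a term $-1$ from the explicit motion of $X_0$ in the direction $\vec{n}$, plus a correction coming from the perpendicular component of the motion of $\ell(s)$ itself. The motion of $\ell(s)$ is driven by the displacement of the collision point on scatterer $0$ (which must track the moving scatterer), propagated through the orbit by the derivative of the billiard map. In Jacobi coordinates adapted to the orbit, each reflection contributes factors involving $1/\cos\alpha_i$ at a collision with angle of incidence $\alpha_i$, and each free flight contributes a factor depending on its length and the unit scatterer curvature. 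The hypothesis $|\alpha_0| < \pi/6$, i.e.\ $\cos\alpha_0 > \sqrt{3}/2$, controls the initial tangential displacement of the collision on scatterer $0$ and is what I expect to yield a perpendicular speed of $\ell(s)$ bounded by $1 - c$, hence $R'(s) \leq -c$.

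The main obstacle is making this derivative bound \emph{uniform} over all admissible periodic orbits, which may be arbitrarily long. Straightforward propagation of a perturbation through a long orbit produces an exponentially growing factor; the resolution must exploit the fact that the orbit is closed and that in a dispersing billiard the relevant cones contract in reverse time, so the sensitivity of a fixed interior segment to a perturbation elsewhere in the orbit is in fact bounded. I would look for a geometric argument specific to the unit-disk configuration to replace the abstract hyperbolicity bound by an explicit estimate in terms of $\alpha_0$, which is where the numerical bound $\pi/6$ should enter.
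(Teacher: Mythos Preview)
Your overall plan matches the paper's: parameterize by moving scatterer $0$ at unit speed toward the near-grazing segment, continue the periodic orbit smoothly, and show the gap $h(\gamma)$ (the paper writes $h$ for your $R+1$) satisfies $h'(\gamma) \le -L$ for some $L>0$ depending only on the table and $\alpha_0$. The decomposition $h'(\gamma) = \ell(Z,\gamma) - 1$, where $\ell(Z,\gamma)$ is the normal speed of the near-grazing segment at its closest point $Z$, is exactly the paper's starting point. One small correction: you do not need to argue that no other collision becomes grazing during the deformation---if one does, you have produced a singular periodic orbit and are already done.

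The genuine gap is precisely where you flag it, and your proposed resolution (backward-time cone contraction giving bounded sensitivity of an interior segment) is not the mechanism that works here. The paper's key step is a \emph{monotonicity} lemma for wave fronts: in a dispersing billiard, a dispersing wave front stays dispersing after every reflection. Translated to the function $P \mapsto |\ell(P,\gamma)|$ along one period of the orbit (from $Q_0$ back to $Q_0$), this forces that function to be either monotone or unimodal---decreasing, then increasing. Consequently its maximum over the \emph{entire} orbit, regardless of length, is attained at the endpoints, i.e.\ at the single collision with scatterer $0$:
\[
|\ell(Z,\gamma)| \le \max\{|\ell_0^+(\gamma)|,\,|\ell_0^-(\gamma)|\}.
\]
This is where the hypothesis of exactly one collision with scatterer $0$ is essential, and it collapses the uniformity problem over long orbits to a local estimate at $Q_0$.

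Bounding $\ell_0^\pm$ is then an explicit (and not entirely short) calculation: the paper writes the derivative of the iterated collision map through a recursively defined function $G(j,k)$, solves the linearized periodicity condition for $u_0'(\gamma)$, and obtains closed formulas for $\ell_0^\pm$ which yield
\[
1 - \ell_0^\pm \;>\; \frac{m\cos\alpha_0}{(M+1)(2m+1)},
\]
with $m$ the minimum scatterer separation and $M$ the maximum center-to-center distance. The hypothesis $|\alpha_0| < \pi/6$ enters only to give $\cos\alpha_0 > \sqrt{3}/2$ here, and a separate bound $|\alpha_0'(\gamma)| < 3/m$ is needed to keep $|\alpha_0(\gamma)| < \pi/6$ throughout the motion. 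Your intuition that $\pi/6$ ``controls the initial tangential displacement'' is in the right spirit, but the actual role is this purely quantitative one in the final inequality.
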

The conditions for this theorem are demonstrated in Figure \figCond. If the second bulleted condition could be removed, it would imply that Turaev and Rom-Kedar's conjecture regarding the density of dispersing billiards with singular periodic orbits is true for all billiard tables consisting of the union of finitely many open unit disks with disjoint closures.

\begin{figure} \label{figCond}
    \includegraphics[width=84mm]{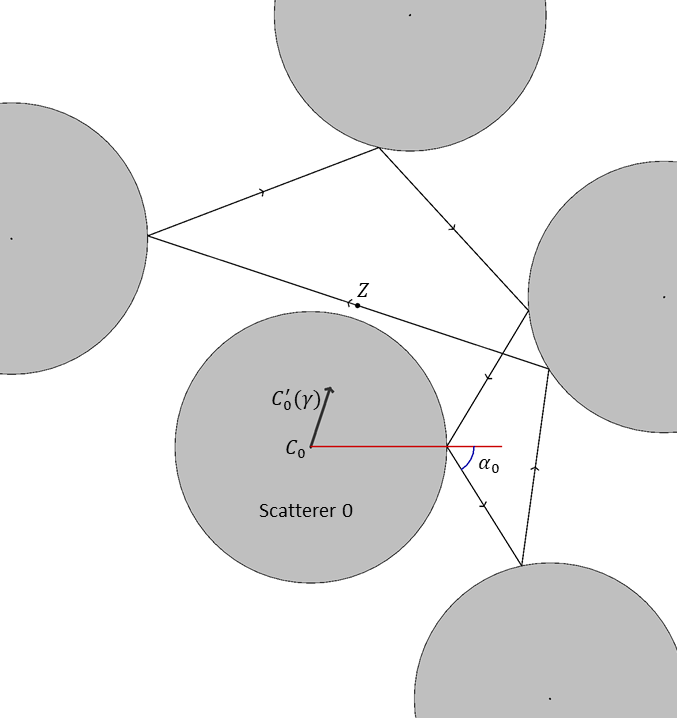}
    \caption{For Theorem \ref{mainTh}, we require that $ \abs{\alpha_0} < \frac{\pi}{6} $, and that there is only one collision point with scatterer $0$ per period. We then move scatterer $0$ toward point $Z$, assumed to be a distance of less than $\delta$ from scatterer $0$.}
\end{figure}

In Section \ref{SecMap} of this paper, we describe the problem in terms of a collision map. Then, in Section \ref{SecAnal} we provide a method for analyzing perturbations of a periodic orbit in terms of derivatives of this collision map, and use this to prove Theorem \ref{mainTh}. All calculations necessary for this are proven in Section \ref{calc}.

\section{The Collision Map and Its Derivatives} \label{SecMap}

To analyze a periodic orbit, we let $N$ be the number of collision points per period of the trajectory and denote the collision points by $Q_k$, for $ k = 0, 1, \cdots, N-1 $, where the points are ordered consecutively in the path of a particle in the trajectory. \keyterm{Scatterer $k$} will refer to the scatterer containing $Q_k$ on its boundary, whose center we denote by $C_k$. Note that the scatterers, and therefore the points $C_k$, may coincide for multiple values of $k$, except for $C_0$ by the hypothesis of Theorem \ref{mainTh}. \keyterm{Segment $k$} will refer to the portion of the trajectory between collision points $Q_k$ and $Q_{k+1}$.

We will analyze the billiard using the collision map, or the map from the position and velocity at one collision point to the position and velocity at the next collision point. For any $ 0 \leq k \leq N-1 $, we represent this map as $ F_k : (\alpha_k, \phi_k) \mapsto (\alpha_{k+1}, \phi_{k+1}) $, where $ \phi_k \in \R / 2 \pi \mathbb{Z} $ is such that $ Q_k = C_k + (\cos \phi_k, \sin \phi_k) $, and $ \alpha_k \in \bracket{ -\frac{\pi}{2}, \frac{\pi}{2} } $ is the signed angle of incidence at this collision. Some instances of these parameters, along with $\omega_k$ and $r_k$ to be defined below, are illustrated in Figure \figParam.
Where appropriate, we identify the values $C_0$, $Q_0$, $\alpha_0$, and $\phi_0$ with $C_N$, $Q_N$, $\alpha_N$, and $\phi_N$, respectively, due to the periodicity of the orbit.
It can be proven by a simple geometric argument that for all $j$,
\begin{equation} \label{angId}
    \omega_j = \phi_j - \alpha_j \text{ and } \omega_{j-1} = \phi_j + \alpha_j + \pi,
\end{equation}
where all computations on $\phi_i$ or $\omega_i$ are to be performed in $ \R / 2\pi\Z $.

\begin{figure} \label{figParam}
    \includegraphics[width=84mm]{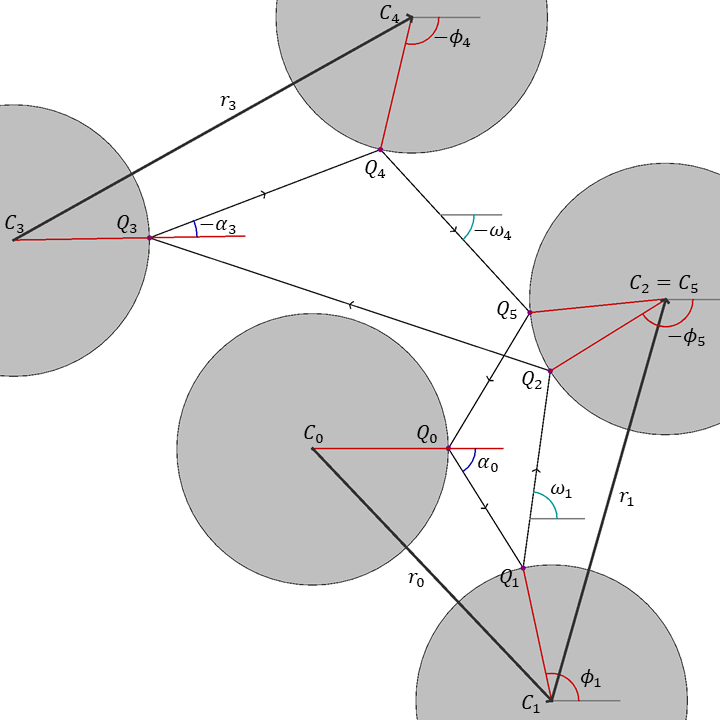}
    \caption{Geometric illustration of the meaning of $r_k$, $C_k$, $Q_k$, $\phi_k$, $\alpha_k$, and $\omega_k$. Collision points $Q_2$ and $Q_5$ lie on the same scatterer, so we set $ C_2 = C_5 $.}
\end{figure}


Chernov and Makarin (2006) give a general form for the derivative of the collision map in terms of the curvature of scatterers, angles of incidence, and distance between collision points. In our case, the curvature of every scatterer is $1$, so as a special case of this general form,
\begin{equation} \label{mapderiveq}
    \mapderiv{k} = -\sec \alpha_{k+1} \mat{s_k + \cos \alpha_{k+1}}{-(s_k + \cos \alpha_k + \cos \alpha_{k+1})}{-s_k}{s_k + \cos \alpha_k},
\end{equation}
To make our calculations simpler, we apply an affine transformation $ u_k = (\phi_k - \alpha_k, \phi_k + \alpha_k + \pi) $ and analyze the map $ B_k : u_k \mapsto u_{k+1} $. Then $ u_k = (\omega_k, \omega_{k-1}) $, where for each segment $i$ in the trajectory, a particle moves in the direction of $ (\cos \omega_i, \sin \omega_i) $. Note that this is a conjugacy except that it fails to be surjective when a grazing collision occurs at $Q_k$. However, at all non-singular points, the derivative can be computed as a conjugacy from (\ref{mapderiveq}):
\begin{equation} \label{mapderivueq}
    \mapuderiv{k} = -\sec \alpha_{k+1} \mat{2s_k + \cos \alpha_k + \cos \alpha_{k+1}}{\cos \alpha_k}{-\cos \alpha_{k+1}}{0}.
\end{equation}

Note that the above collision maps are also dependent on $C_k$ and $C_{k+1}$. It is possible to define a single collision map $B$ with an additional parameter $r$ such that for all $k$, $ B(u_k,r_k) = u_{k+1} $, where $ r_k = C_{k+1} - C_k $ is the displacement between the centers of the respective scatterers. Then, $ \pderiv{B}{u}(u_k,r_k) $ is as in (\ref{mapderivueq}) and
\begin{equation} \label{dBdr}
    \pderiv{B}{r}(u_k,r_k) = -\sec \alpha_{k+1} \mat{2 \sin \omega_k}{-2 \cos \omega_k}{0}{0}.
\end{equation}
To prove this, we apply a geometric argument and break $dr$ up into components of $r_k$. The case where $dr$ is on the $y$-axis is illustrated in Figure \figdBdr. Note that we move scatterer $k$, which moves in the opposite direction as $r_k$ moves. The trajectory near scatterer $k$ moves only in translation with the scatterer, since $u_k$ remains fixed. This means that $ dQ_k = dC_k $ and $\omega_k$ is constant, so the second row of (\ref{dBdr}) is zero.
We can compute the change $d\ell$ in the ``position'' of segment $k$ as a projection of $dC_k$ onto its normal.
$dQ_{k+1}$ must also have the same such projection and satisfy the identity
\begin{equation}
    dQ_{k+1} = d\phi_{k+1} (-\sin \phi_{k+1}, \cos \phi_{k+1}),
\end{equation}
from which $d\phi_{k+1}$ can be computed. Finally, as a consequence of (\ref{angId}), we have the identity $ 2\phi_{k+1} = \omega_k + \omega_{k+1} + \pi $, and hence $ d\omega_{k+1} = 2 d\phi_{k+1} $.

\begin{figure} \label{figdBdr}
  \includegraphics[width=84mm]{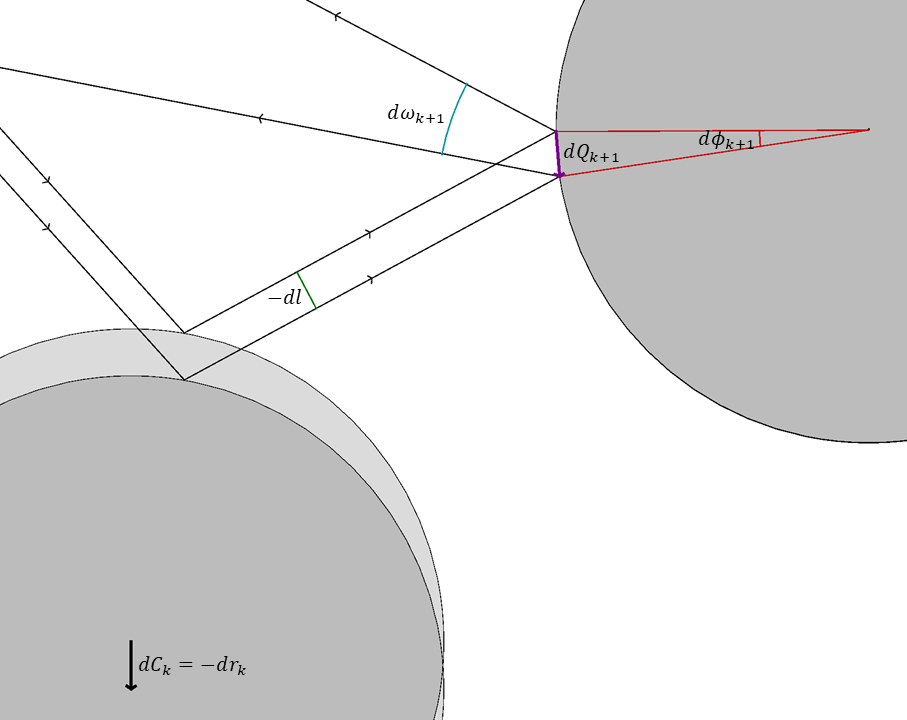}
\caption{The computation of the $y$-component of $\pderiv{B}{r}(u_k,r_k)$}
\end{figure}

\section{Analyzing Perturbations of a Periodic Orbit} \label{SecAnal}

We write the derivative with respect to $u$ of multiple iterations of the collision map as
\begin{equation} \label{ukujdef}
    \deriv{u_k}{u_j} := \pderiv{B}{u}(u_{k-1},r_{k-1}) \cdots \pderiv{B}{u}(u_{j+1},r_{j+1}) \pderiv{B}{u}(u_j,r_j).
\end{equation}
We derive a formula for $\deriv{u_k}{u_j}$ and perform all other applicable calculations in Section \ref{calc}. In this section, we describe our general method.

To analyze perturbations of a periodic orbit and a periodic trajectory, we let $N$ be the number of collision points per period and impose the condition that $ u_N = u_0 $, which is equivalent to
\begin{equation} \label{periodCond}
    u_0 = B\parens{\cdots\parens{B\parens{B\parens{B(u_0,r_0),r_1},r_2}\cdots},r_{N-1}}.
\end{equation}
We analyze a family of billiard systems, along with their associated periodic orbits, where the system and trajectory vary continuously with some parameter $\gamma$, and $ \gamma = 0 $ corresponds to the initial, unperturbed system.
We also consider all parameters of the trajectory and collision points as functions of $\gamma$ and write $\alpha_k(\gamma)$, $\omega_k(\gamma)$, $s_k(\gamma)$, $Q_k(\gamma)$, etc., to denote the respective values in the trajectory and system corresponding to a particular $\gamma$.
We move the position of scatterer 0 with $\gamma$ and leave the positions of the other scatterers unchanged. We are assuming that $Q_0$ is the only collision point on this scatterer, so then $ r'_{N-1}(\gamma) = -r'_0(\gamma) $, and for all $ 1 \leq i \leq N-2 $, $ r'_i(\gamma) = 0 $. Therefore, differentiating (\ref{periodCond}) with respect to $u_0$ yields
\begin{equation} \label{periodDerivEq}
    u_0'(\gamma) = \deriv{u_N}{u_0} u_0'(\gamma) + \parens{ \deriv{u_N}{u_1} \pderiv{B}{r}(u_0,r_0) - \pderiv{B}{r}(u_{N-1},r_{N-1}) } r_0'(\gamma).
\end{equation}
We solve this equation in Section \ref{calc}.

If the periodic orbit corresponding to any value of $\gamma$ contains a grazing collision before scatterer $0$ reaches segment $k$, then there must be a singularity somewhere in the trajectory, and the goal of Theorem \ref{mainTh} would already be attained. Therefore, in all future analysis and calculations, we assume this is not the case. As a consequence, the sequence of points $ \set{C_0,C_1,\cdots,C_{N-1}} $ is independent of $\gamma$, except for the fact that $C_0$ depends continuously on $ \gamma $.

As our analysis is invariant under isometries of the entire system, we assume without loss of generality that $C_0$ is the origin and $ \phi_0 = 0 $.
We also assume that $ h(0) $ is positive. The case where $ h(0) $ is negative follows by reflecting the entire system and the periodic trajectory about the $x$-axis.

To move scatterer $0$ toward segment $k$ of the trajectory, we let $Z$ be the point on segment $k$ closest to scatterer $0$, or equivalently, closest to its center $C_0$, and $h$ be the signed distance between $Z$ and $C_0$, such that $ Z = (h \sin \omega_k, -h \cos \omega_k) $ (This is possible since $ (-\sin \omega_k, \cos \omega_k) $ is the normal to segment $k$).
By the assumptions of Theorem \ref{mainTh}, $ 1 < \abs{h} < 1 + \eps $, but $h$ could be positive or negative.

We consider $C_0$ as a unit-speed function of $\gamma$ and set $ C_0'(\gamma) = (\cos \theta, \sin \theta) $, where $ \theta = \omega_k \pm \frac{\pi}{2} $ is such that $C_0'(\gamma)$ is directed toward $Z$, as shown in Figure \figCond.
To prove Theorem \ref{mainTh}, we aim to decrease the distance between $C_0$ and segment $k$ of the trajectory to $1$. This distance is influenced my the motion of scatterer $0$ as well as the movement of segment $k$ itself near point $Z$. Therefore, we need a notion of describing how the ``position'' of the trajectory at a specific point varies with changes in the system.

For this purpose, we introduce the concept of a \keyterm{wave front}, as defined in Section 3.7 of a book by Chernov and Makarin (2006). Let $P$ be any point on a segment $k$ of the trajectory other than a collision point, for some value $\gamma_0$ of $\gamma$. Then, there exists a smooth curve $\sigma_P$, parametrized by $\gamma$ on some open interval containing $\gamma_0$ such that $ \sigma_P(\gamma_0) = P $, and for all $\gamma$, segment $k$ of the trajectory with parameter $\gamma$ passes through $\sigma_P(\gamma)$ and is perpendicular to $\sigma_P$. The curve $\sigma_P$, together with the family of trajectories for various values of $\gamma$, is a \keyterm{wave front}, as shown in Figure \figWave.

For any point $P$ on a segment $k$ of the trajectory, there is some constant $\ell(P,\gamma_0)$ such that $ \sigma'_P(\gamma_0) = \ell(P,\gamma_0) (-\sin \omega_k(\gamma_0), \cos \omega_k(\gamma_0)) $. Then, $\ell$ represents the magnitude of the derivative of the ``position'' of the trajectory at point $P$ with respect to $\gamma$. Since we assume $h$ is positive and $C_0$ is moving toward $Z$ at unit speed, we have $ h'(\gamma) = \ell(Z,\gamma) - 1 $.
To prove Theorem \ref{mainTh}, we prove that for some positive constant $L$, $ h'_0(\gamma) < -L $ for sufficient values of $\gamma$.

It is evident that as $P$ moves along the trajectory and $\gamma$ remains fixed, the only possible discontinuities of $\ell(P,\gamma)$ are at collision points.
To analyze the behavior of $\ell(P,\gamma)$ at the collision points, we define
\begin{equation} \label{lkdef}
    \ell_k^+(\gamma) := \lim_{P \rightarrow Q_k^+} \ell(P,\gamma) \text{ and } \ell_k^-(\gamma) := \lim_{P \rightarrow Q_k^-} \ell(P,\gamma),
\end{equation}
where the limits are taken as $P$ varies along the trajectory, approaching from immediately after the collision and immediately before the collision, respectively.
The wave fronts at two different points, as well as these limits, are shown in Figure \figWave.
$ \ell_k^- $ and $ \ell_k^+ $ are the derivatives with respect to $\gamma$ of the ``positions'' of segments $k-1$ and $k$, respectively, of the trajectory near the collision point $Q_k$. These values are illustrated in Figure \figWave.

\begin{figure} \label{figWave}
  \includegraphics[width=84mm]{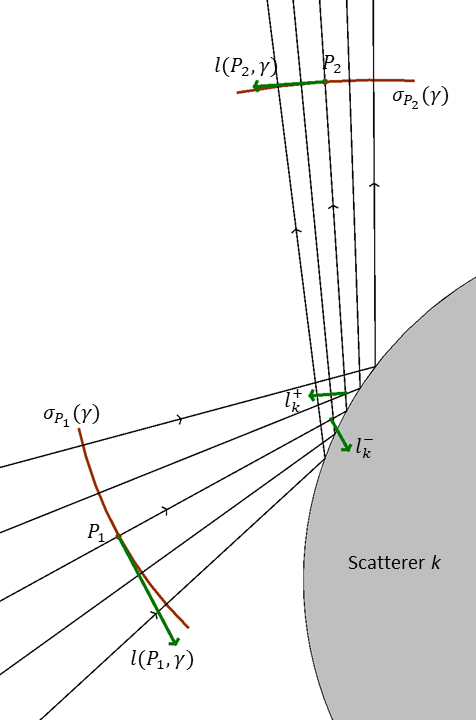}
\caption{Two points on the trajectory, $P_1$ and $P_2$, their respective wave fronts, and tangent vectors corresponding to $\ell(P_i,\gamma)$. Note that in this case, $\ell(P_1,\gamma)$ is negative and $\ell(P_2,\gamma)$ is positive. The values $\ell_k^+$ and $\ell_k^-$ are the one-sided limits of the values $\ell(P,\gamma)$ as $P$ approaches the collision point with scatterer $k$.}
\end{figure}

It can be proven through a simple geometric argument, as shown in Figure \figlfixed, that if $Q_k$ is independent of $\gamma$, then
\begin{equation} \label{lfixed}
    \ell_k^+(\gamma) = -\ell_k^-(\gamma) = \cos \alpha_k(\gamma) \phi_k'(\gamma).
\end{equation}
This is the case for all collision points except $Q_0$, so $\abs{\ell(P,\gamma)}$ varies continuously with $P$ everywhere except at $Q_0$. We use a property of dispersing billiards to prove the following lemma:

\begin{figure} \label{figlfixed}
  \includegraphics[width=84mm]{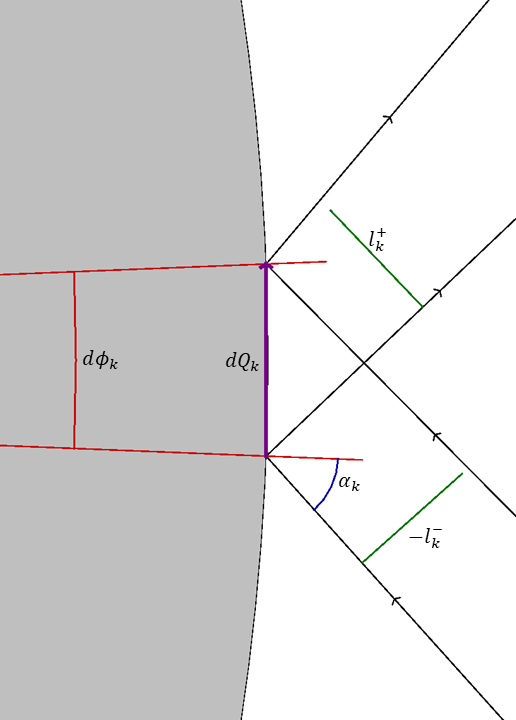}
\caption{The proof of (\ref{lfixed}). $\ell_k^-$ and $\ell_k^+$ are projetions of $dQ_k$ onto the normal vectors to segments $k-1$ and $k$, respectively, of the trajectory. Therefore, these values can be computed by $dQ_k$ and the angle of incidence $\alpha_k$.}
\end{figure}

\begin{lemma} \label{lklem}
    For all points $P$ in the trajectory, $ \abs{\ell(P,\gamma)} \leq \max \set{\abs{\ell_0^+(\gamma)}, \abs{\ell_0^-(\gamma)}} $.
\end{lemma}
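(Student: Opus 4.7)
The plan is to analyze $\ell(\cdot,\gamma)$ along the trajectory using the standard wave front formalism and to invoke the dispersing reflection law to show that $|\ell|$ cannot attain its maximum at any collision other than $Q_0$. First, I would introduce the companion wave front variable $\psi(P,\gamma)$, defined as the rate of change with respect to $\gamma$ of the direction $\omega$ of the trajectory segment containing $P$. A standard wave front computation (Chernov--Markarian, Section 3.7) shows that along any free segment, $\psi$ is constant and $\ell$ is affine in arclength: $\ell(s) = \ell(0) + s\psi$. Consequently $|\ell|$ is the absolute value of an affine function on each segment and is either monotone, V-shaped, or constant, so its maximum on the segment is always attained at one of the two endpoints.

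By equation (\ref{lfixed}), $|\ell|$ is also continuous across every collision except $Q_0$, so the maximum of $|\ell|$ along the closed arc from $Q_0^+$ forward to $Q_0^-$ must be attained at $Q_0^+$, at $Q_0^-$, or at some intermediate collision $Q_j$ with $j \neq 0$. To exclude the last case I would invoke the dispersing mirror formula: for a unit-curvature convex scatterer one has $\ell_j^+ = -\ell_j^-$ and $\psi_j^+ = -\psi_j^- - (2/\cos\alpha_j)\ell_j^-$, which combine to give
\begin{equation}
    \ell_j^+\psi_j^+ \;=\; \ell_j^-\psi_j^- + \frac{2}{\cos\alpha_j}(\ell_j^-)^2 \;\geq\; \ell_j^-\psi_j^-.
\end{equation}
If $|\ell|$ had a local maximum at $Q_j$, then $|\ell|$ would be non-decreasing just before $Q_j$ and non-increasing just after; by the affine structure on each segment these translate respectively to $\ell_j^-\psi_j^- \geq 0$ and $\ell_j^+\psi_j^+ \leq 0$. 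The displayed identity then forces both products to vanish and $\ell_j^- = 0$. Since $|\ell|$ is nonnegative, the local-maximum assumption then forces $\ell$ to vanish on both adjacent segments; iterating this reasoning backward and forward around the orbit propagates $\ell \equiv 0$ over the entire trajectory, in which case the lemma holds trivially.

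Putting the pieces together, in the nontrivial case the global maximum of $|\ell|$ along the trajectory is attained at $Q_0^+$ or $Q_0^-$, which is precisely the bound claimed. The main technical hurdle I anticipate is setting up the wave front variable $\psi$ with a consistent sign convention so that the mirror formula reads as above; once that calculus is in place, the proof reduces to the short sign dichotomy presented here and uses nothing about the system beyond the dispersing property of the unit-disk scatterers.
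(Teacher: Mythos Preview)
Your argument is correct and arrives at the same conclusion as the paper, but by a more explicit, self-contained route. The paper's proof simply invokes the Chernov--Markarian fact that a dispersing wave front in a dispersing billiard remains dispersing; from this it deduces that as $P$ traverses one period from $Q_0^+$ to $Q_0^-$, the function $|\ell(P,\gamma)|$ is either monotone or first decreasing and then increasing, so its maximum is necessarily at one of the two endpoints. You instead unpack that cited fact: you introduce the companion variable $\psi$, observe that $\ell\psi$ is the arclength derivative of $\tfrac{1}{2}\ell^2$, and use the mirror formula to show $\ell\psi$ can only jump upward at a collision with a convex scatterer. This rules out an interior local maximum of $|\ell|$ at any $Q_j$ with $j\neq 0$ except in the degenerate case $\ell\equiv 0$. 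Your approach has the advantage of being self-contained and of making the role of the dispersing geometry (positive curvature in the mirror formula) completely explicit; the paper's approach is shorter but leans entirely on the reference. Your identification of the sign convention for $\psi$ as the only genuine technical point is accurate---once $\ell_j^+=-\ell_j^-$ and $\psi_j^+=-\psi_j^- - (2/\cos\alpha_j)\ell_j^-$ are fixed, the rest is the short dichotomy you wrote. One minor simplification: since you are after the \emph{global} maximum, once you obtain $\ell_j^-=0$ at that maximum you can conclude $|\ell|\equiv 0$ immediately, without the forward/backward propagation step.
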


\begin{proof}
    We use the concept of dispersing and focusing wave fronts as defined by Chernov and Makarin (2006): a wave front $\sigma$ is \keyterm{dispersing} precisely when $\abs{\ell(P,\gamma)}$ is increasing as $P$ moves along the trajectory in the forward direction, and $\sigma$ is \keyterm{focusing} when $\abs{\ell(P,\gamma)}$ decreases. Chernov and Makarin also prove that any dispersing wave front in a dispersing billiard will remain dispersing in the future, as long as they are unaffected by changes in the system. This means that, as $P$ moves along the trajectory in one period from $Q_0$ back to itself and $\gamma$ remains constant, there are three possibilities:
    \begin{itemize}
        \item $\abs{\ell(P,\gamma)}$ is monotonically decreasing,
        \item $\abs{\ell(P,\gamma)}$ is monotonically increasing, or
        \item $\abs{\ell(P,\gamma)}$ is monotonically decreasing from $Q_0$ to some point $E$ on the trajectory, and monotonically increasing from $E$ to $Q_0$.
    \end{itemize}
    In all of these cases, the lemma is true.
\end{proof}

Since $ h'(\gamma) = \ell(Z,\gamma) - 1 $, the following is a direct consequence:

\begin{corollary}
    It holds that
    \begin{equation} \label{hprimebound}
        h'(\gamma) < -\min \set{1 - \ell_0^+, 1 - \ell_0^-}.
    \end{equation}
\end{corollary}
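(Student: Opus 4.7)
The plan is to apply Lemma \ref{lklem} directly at the single point $P = Z$, since the corollary is essentially just a specialization of that lemma combined with the identity $h'(\gamma) = \ell(Z,\gamma) - 1$ derived in the preceding discussion. Under the standing assumption that scatterer $0$ has not yet reached segment $k$, the point $Z$ lies in the interior of segment $k$ and in particular is not a collision point, so Lemma \ref{lklem} applies without modification.

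The core of the proof is then a one-line substitution. Lemma \ref{lklem} yields
\begin{equation*}
    \ell(Z,\gamma) \le |\ell(Z,\gamma)| \le \max\{|\ell_0^+(\gamma)|,|\ell_0^-(\gamma)|\},
\end{equation*}
and subtracting $1$ and using $h'(\gamma) = \ell(Z,\gamma) - 1$ gives
\begin{equation*}
    h'(\gamma) \le \max\{\ell_0^+(\gamma),\ell_0^-(\gamma)\} - 1 = -\min\{1-\ell_0^+(\gamma),1-\ell_0^-(\gamma)\},
\end{equation*}
which is the stated bound.

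The only delicate point, and what I expect to be the main (mild) obstacle, is justifying the passage from the absolute-value bound provided by Lemma \ref{lklem} to the signed bound that appears in the corollary. I would handle this with the coordinate setup fixed just above the statement: $C_0$ at the origin, $\phi_0 = 0$, $h(0) > 0$, and $C_0'(\gamma) = (\cos\theta,\sin\theta)$ chosen to point toward $Z$. Since $Q_0$ moves rigidly with scatterer $0$, we have $dQ_0/d\gamma = C_0'(\gamma)$, and applying the projection identity of the form in (\ref{lfixed}) (but now to $Q_0$, whose motion is nontrivial) shows that both $\ell_0^+(\gamma)$ and $\ell_0^-(\gamma)$ are nonnegative: the wave fronts on either side of the collision at $Q_0$ are dragged in a common direction aligned with the motion of $C_0$, and the inner products with the corresponding outgoing and incoming normals are both nonnegative under $|\alpha_0|<\pi/6$ and $\theta = \omega_k \pm \pi/2$. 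Granted this, $\max\{|\ell_0^+|,|\ell_0^-|\} = \max\{\ell_0^+,\ell_0^-\}$ and the absolute values drop. Strict inequality in the corollary then follows from the Chernov–Markarian propagation property used in the proof of Lemma \ref{lklem}: $|\ell(P,\gamma)|$ is strictly monotonic along any segment of a dispersing billiard, so the supremum attained as a one-sided limit at $Q_0$ cannot be matched at the interior point $Z$ of segment $k$.
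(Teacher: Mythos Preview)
Your core argument---apply Lemma \ref{lklem} at $P=Z$ and substitute $h'(\gamma)=\ell(Z,\gamma)-1$---is exactly what the paper does; indeed the paper offers no proof beyond the sentence ``Since $h'(\gamma)=\ell(Z,\gamma)-1$, the following is a direct consequence.''

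Where you go further is in trying to pass from the absolute-value bound of Lemma \ref{lklem} to the signed bound in the corollary, and here your justification contains a genuine error. It is \emph{not} true that $Q_0$ moves rigidly with scatterer $0$, i.e.\ that $dQ_0/d\gamma=C_0'(\gamma)$. The collision point satisfies $Q_0=C_0+(\cos\phi_0,\sin\phi_0)$, and $\phi_0$ varies with $\gamma$ because the whole periodic orbit deforms---this is precisely what Proposition \ref{periodCondSol} computes (the formula for $u_0'(\gamma)$ is nonzero, and $\phi_0$ is a function of $u_0$ via (\ref{angId})). The identity (\ref{lfixed}) is stated explicitly only for collision points with $Q_k$ independent of $\gamma$, so invoking it at $Q_0$ is not legitimate, and your conclusion that $\ell_0^\pm\ge 0$ does not follow from the argument you give.

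The paper itself simply elides this point. In practice it does not affect Theorem \ref{mainTh}: the proposition in Section \ref{calc} that is actually invoked bounds $|\ell_0^+|$ and $|\ell_0^-|$ (not the signed $\ell_0^\pm$) below $1-\frac{m\cos\alpha_0}{(M+1)(2m+1)}$, and combining that directly with $h'(\gamma)\le|\ell(Z,\gamma)|-1\le\max\{|\ell_0^+|,|\ell_0^-|\}-1$ from Lemma \ref{lklem} gives the required bound on $h'(\gamma)$ without ever needing the signed form of the corollary.
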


We are now ready to prove our main theorem.

\begin{proof}[Theorem \ref{mainTh}]
    It suffices to prove that for any $ \eps > 0 $, there exists $ L > 0 $ and $ \delta > 0 $ such that $ \frac{\delta}{L} < \eps $, and for any $ \gamma \in \clop{0, \frac{\delta}{L}} $ such that there is no singularity in the trajectory, $ h'(\gamma) < -L $.

    We prove in Section \ref{calc} that $ 1 - \ell_0^+ $ and $ 1 - \ell_0^- $ are both greater than
    $$ \frac{m \cos \alpha_0}{(M+1)(2m+1)}, $$
    where $m$ is the minimum separation between two scatterers and $M$ is the maximum distance between centers of two scatterers.
    We assume for now that $ \abs{\alpha_0(\gamma)} < \frac{\pi}{6} $ for all $\gamma$ and justify this later. Under this assumption,
    $$ \frac{m \cos \alpha_0}{(M+1)(2m+1)} > \frac{\sqrt{3}m}{2(M+1)(2m+1)}. $$
    The bounds $m$ and $M$ may change with $\gamma$, but their change is restricted by the distance in which scatterer $0$ moves: for all values of $\gamma$, $ m \geq m_0 - \gamma $ and $ M \leq M_0 + \gamma $, where $m_0$ and $M_0$ are the values of $m$ and $M$, respectively, for the unperturbed system corresponding to $ \gamma = 0 $.
    Therefore, it is possible to choose $ \delta' < \eps $ sufficiently small so that $m$ and $M$ remain bounded by positive constants, and $L$ sufficiently small so that whenever $ \gamma < \delta' $, $ h'(\gamma) < -L $. This will satisfy the condition of Theorem \ref{mainTh} with $ \delta = L \delta' $.
    
    We now prove that $ \abs{\alpha_0} < \frac{\pi}{6} $ holds for all $ \gamma \in [0, \delta') $. By assumption, it holds for $ \gamma = 0 $. We prove in Section \ref{calc} that if the minimum separation between two disks is less than $m$, then $ \abs{\alpha_0'(\gamma)} < \frac{3}{m} $, which is bounded by some constant $C$ for $ \gamma < \delta' $.
    Therefore, for all $ \gamma < \delta' $, $ \abs{\alpha_0(\gamma)} < \abs{\alpha_0(0)} + C \gamma $. By choosing $\delta'$ sufficiently small (which now must depend on $\alpha_0$), we can ensure that $ \abs{\alpha_0(\gamma)} < \frac{\pi}{6} $ for all $ \gamma < \delta' $, and thus the proof of the theorem is complete.
\end{proof}

\section{Conclusion}

We believe that Theorem \ref{mainTh} is an important step in proving the conjecture of Turaev and Rom-Kedar, which together with their results, would imply that every dispersing billiard system can be transformed into one with regions of stability through arbitrarily small perturbations. To solve this conjecture would require removing the second bulleted condition of Theorem \ref{mainTh}, the restriction that scatterer $0$ only has one collision point per period.

With this restriction removed, the proof of Lemma \ref{lklem} would no longer be valid and (\ref{periodDerivEq}) would take a more complex form, as the other collisions with scatterer $0$ must be incorporated. Therefore, to removing this restriction will require new methods.
If a periodic orbit in a dispersing billiard system can be constructed where $ \abs{\ell(Z,\gamma)} \geq 1 $, then the approach of moving only scatterer $0$ will certainly fail.
Therefore, to generalize Theorem \ref{mainTh} by removing this restriction, it may be necessary to allow not only scatterer $0$ but also other scatterers to be moved. Then, it would suffice to prove that for some combination of functions $C_k(\gamma)$ representing the positions of \textit{all} scatterers, the respective value of $ \abs{h'(\gamma)} $ is bounded by some constant.

\appendix

\section{Calculations} \label{calc}

We first define a recursive function that appears in calculating the derivatives of iterations of the collision map. These functions will be used extensively in the calculations to follow.
Specifically, we define
\begin{equation} \label{Gdef}
    \Gxx(j,k) := \begin{cases} 0, & k = j - 1 \\ 1, & k = j \\ (2s_{k-1} + \cos \alpha_{k-1} + \cos \alpha_k) \Gxx(j,k-1) - \cos^2 \alpha_{k-1} \Gxx(j,k-2), & k \geq j + 1, \end{cases}
\end{equation}
and 
\begin{equation} \label{pcosdef}
    \pcos(j,k) := \cos \alpha_j \cos \alpha_{j+1} \cdots \cos \alpha_{k-1}.
\end{equation}
Before beginning our calculations, we introduce some useful identities associated with the $G$ function.

\begin{lemma} \label{Gprodreduce}
	For any $ j \leq k $, $ \Gxx(j,k-1) \Gxx(j+1,k) - \Gxx(j,k) \Gxx(j+1,k-1) = [\pcos(j+1,k)]^2 $.
\end{lemma}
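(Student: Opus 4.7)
The plan is to induct on $k$ with $j$ held fixed. Let $D(k) := \Gxx(j,k-1)\Gxx(j+1,k) - \Gxx(j,k)\Gxx(j+1,k-1)$ denote the left-hand side. The crux of the argument is to show that $D$ itself satisfies the simple recursion $D(k) = \cos^2\alpha_{k-1} \cdot D(k-1)$, which matches exactly the growth of $[\pcos(j+1,k)]^2$ implied by definition (\ref{pcosdef}).

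For the base case $k = j+1$, the boundary values in (\ref{Gdef}) give $\Gxx(j,j) = \Gxx(j+1,j+1) = 1$ and $\Gxx(j+1,j) = 0$, so $D(j+1) = 1 \cdot 1 - \Gxx(j,j+1) \cdot 0 = 1$. On the other side, $\pcos(j+1,j+1)$ is the empty product and hence equal to $1$, so the identity holds.

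For the inductive step, assuming the identity for $k-1$, I would apply the three-term recursion (\ref{Gdef}) to both $\Gxx(j,k)$ and $\Gxx(j+1,k)$, writing each as $a_k \Gxx(i,k-1) - b_k \Gxx(i,k-2)$ where $a_k := 2s_{k-1} + \cos\alpha_{k-1} + \cos\alpha_k$ and $b_k := \cos^2\alpha_{k-1}$. Substituting these expansions into $D(k)$, the two terms containing $a_k$ cancel pairwise, leaving
\begin{equation*}
    D(k) = b_k \bracket{\Gxx(j,k-2)\Gxx(j+1,k-1) - \Gxx(j,k-1)\Gxx(j+1,k-2)} = \cos^2\alpha_{k-1} \cdot D(k-1).
\end{equation*}
The inductive hypothesis gives $D(k-1) = [\pcos(j+1,k-1)]^2$, and (\ref{pcosdef}) then yields $\cos^2\alpha_{k-1} [\pcos(j+1,k-1)]^2 = [\pcos(j+1,k)]^2$, completing the step.

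This is essentially a routine telescoping computation and I do not anticipate a significant obstacle. The only real subtlety is noticing that $D(k-1)$ has exactly the same bilinear form as $D(k)$, which is what allows the recursion to close on itself rather than produce a more complicated expression; starting the induction at $k = j+1$ also sidesteps the ill-defined quantity $\Gxx(j+1,j-1)$ that would arise at $k = j$.
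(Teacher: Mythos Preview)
Your induction is correct and complete. The recursion $D(k)=\cos^2\alpha_{k-1}\,D(k-1)$ is exactly the discrete Wronskian identity for a three-term recurrence, and your base case and telescoping are both fine; your remark about avoiding $\Gxx(j+1,j-1)$ at $k=j$ is also well taken.

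The paper, however, argues quite differently. It observes that $\det \pderiv{B}{u}(u_i,r_i)=\cos\alpha_i/\cos\alpha_{i+1}$ from (\ref{mapderivueq}), so by multiplicativity $\det\deriv{u_k}{u_j}=\cos\alpha_j/\cos\alpha_k$. On the other hand, the entries of $\deriv{u_k}{u_j}$ are (up to a common scalar) precisely the four $G$-values appearing in the lemma, and equating the two expressions for the determinant yields the identity in one stroke. Your proof is more elementary and entirely self-contained from the recursive definition (\ref{Gdef}), with no reference to the billiard map; the paper's proof is shorter and explains \emph{why} the identity holds --- it is nothing but the multiplicativity of Jacobian determinants --- but as written it leans on the matrix form of $\deriv{u_k}{u_j}$ that is only stated later in Proposition~\ref{derivformulaProp}, so your argument is arguably cleaner from a logical-ordering standpoint.
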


\begin{proof}
	The determinant of the Jacobian of the collision map is $ \det \pderiv{B}{u}(u_k,r_k) = \frac{\cos \alpha_k}{\cos \alpha_{k+1}} $, so it follows that $ \det \deriv{u_k}{u_j} = \frac{\cos \alpha_j}{\cos \alpha_k} $, where $ \deriv{u_k}{u_j} $ is as defined in (\ref{ukujdef}). Applying (\ref{mapderivueq}) yields
	$$ \det \deriv{u_k}{u_j} = \frac{\cos \alpha_j \cos \alpha_k}{[\pcos(j+1,k+1)]^2} [\Gxx(j,k-1) \Gxx(j+1,k) - \Gxx(j,k) \Gxx(j+1,k-1)]. $$
	Equating these two values and simplifying the result completes the proof.
\end{proof}

We also have the following generalization of the recursive formula given in the definition:

\begin{lemma} \label{Gpart}
	For any $i,j,k$ such that $i \leq j \leq k$, $ \Gxx(i,k) = \Gxx(i,j) \Gxx(j,k) - \cos^2 \alpha_j \Gxx(i,j-1) \Gxx(j+1,k) $.
\end{lemma}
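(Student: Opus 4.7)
The plan is to fix $i \leq j$ and induct on $k$, starting from $k = j$. The key structural observation is that the recurrence in (\ref{Gdef}) is a two-term linear recurrence in the second argument whose coefficients $2 s_{k-1} + \cos \alpha_{k-1} + \cos \alpha_k$ and $\cos^2 \alpha_{k-1}$ do not depend on the first argument. Hence $\Gxx(i, \cdot)$, $\Gxx(j, \cdot)$, and $\Gxx(j+1, \cdot)$ all satisfy the same recurrence, and the claimed identity is the standard ``concatenation'' formula for two solutions of a common three-term linear recurrence (an instance of the classical continuant identities).

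For the base cases, at $k = j$ the right-hand side collapses to $\Gxx(i,j) \cdot \Gxx(j,j) - \cos^2 \alpha_j \cdot \Gxx(i, j-1) \cdot \Gxx(j+1, j) = \Gxx(i,j)$, using $\Gxx(j,j) = 1$ and $\Gxx(j+1, j) = 0$. At $k = j+1$, I would compute $\Gxx(j+1, j+1) = 1$ and $\Gxx(j, j+1) = 2 s_j + \cos \alpha_j + \cos \alpha_{j+1}$ directly from the definition, whereupon the right-hand side becomes exactly the recursive expression for $\Gxx(i, j+1)$ given by (\ref{Gdef}).

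For the inductive step, assume the identity at $k-1$ and $k-2$ with $k \geq j+2$. Expand $\Gxx(i, k)$ via the recurrence and substitute the inductive hypotheses into $\Gxx(i, k-1)$ and $\Gxx(i, k-2)$. Regrouping, the factor multiplying $\Gxx(i, j)$ becomes $(2 s_{k-1} + \cos \alpha_{k-1} + \cos \alpha_k) \Gxx(j, k-1) - \cos^2 \alpha_{k-1} \Gxx(j, k-2) = \Gxx(j, k)$, and the factor multiplying $-\cos^2 \alpha_j \Gxx(i, j-1)$ collapses analogously to $\Gxx(j+1, k)$, completing the induction. I do not expect any real obstacle: the proof is pure algebra using the fact that the recurrence coefficients are independent of the first index, and the only care needed is to verify the two base cases and to avoid off-by-one slips in the indices.
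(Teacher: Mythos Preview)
Your proposal is correct and follows essentially the same approach as the paper: verify the base cases $k=j$ and $k=j+1$ directly, then induct on $k$ by expanding $\Gxx(i,k)$ via the recurrence (\ref{Gdef}), substituting the inductive hypotheses for $\Gxx(i,k-1)$ and $\Gxx(i,k-2)$, and regrouping to recover $\Gxx(j,k)$ and $\Gxx(j+1,k)$. The paper's proof is exactly this computation, written out in full.
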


In particular, for $ j < k $,
\begin{equation} \label{Gdefflip}
    \Gxx(j,k) = (2s_j + \cos \alpha_j + \cos \alpha_{j+1}) \Gxx(j+1,k) - \cos^2 \alpha_{j+1} \Gxx(j+2,k).
\end{equation}
This can be used as an alternative definition of $\Gxx$ and is related to the original definition by the billiard involution.

\begin{proof}
	The result can be directly verified for the cases $k=j$ and $k=j+1$. Assume the statement is true for $(i,j,k-1)$ and $(i,j,k-2)$. Then,
	\begin{equation} \begin{split}
    	\Gxx(i,k) & = (2s_{k-1} + \cos \alpha_{k-1} + \cos \alpha_k) [\Gxx(i,j) \Gxx(j,k-1) - \cos^2 \alpha_j \Gxx(i,j-1) \Gxx(j+1,k-1)] \\
    	& \qquad - \cos^2 \alpha_{k-1} [\Gxx(i,j) \Gxx(j,k-2) - \cos^2 \alpha_j \Gxx(i,j-1) \Gxx(j+1,k-2)] \\
	    & = \Gxx(i,j) [ (2s_{k-1} + \cos \alpha_{k-1} + \cos \alpha_k) \Gxx(j,k-1) - \cos^2 \alpha_{k-1} \Gxx(j,k-2) ] \\
	    & \qquad + \cos^2 \alpha_j \Gxx(i,j-1) [ (2s_{k-1} + \cos \alpha_{k-1} + \cos \alpha_k) \Gxx(j+1,k-1) - \cos^2 \alpha_{k-1} \Gxx(j+1,k-2) ] \\
	    & = \Gxx(i,j) \Gxx(j,k) - \cos^2 \alpha_j \Gxx(i,j-1) \Gxx(j+1,k)
	\end{split} \end{equation}
\end{proof}

We proceed with our calculations necessary to prove Theorem \ref{mainTh}.

\begin{proposition} \label{derivformulaProp}
    For all integers $j$ and $k$, where $ j < k $,
	$$ \deriv{u_k}{u_j} = \frac{(-1)^{k-j}}{\pcos(j+1,k+1)} \mat{\Gxx(j,k)}{\cos \alpha_j \Gxx(j+1,k)}{-\cos \alpha_k \Gxx(j,k-1)}{-\cos \alpha_j \cos \alpha_k \Gxx(j+1,k-1)}. $$
\end{proposition}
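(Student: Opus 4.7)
My plan is to prove Proposition \ref{derivformulaProp} by induction on $k-j$, exploiting the fact that the recursive definition of $\Gxx$ in (\ref{Gdef}) mirrors exactly the action of left-multiplying by $\pderiv{B}{u}(u_{k-1},r_{k-1})$.

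For the base case $k = j+1$, I would substitute the initial values $\Gxx(j,j-1) = 0$, $\Gxx(j,j) = 1$, $\Gxx(j+1,j+1) = 1$, $\Gxx(j+1,j) = 0$, and $\Gxx(j,j+1) = 2s_j + \cos\alpha_j + \cos\alpha_{j+1}$ (the last obtained by a single application of the recursion) into the claimed formula, and verify that what comes out is exactly $\mapuderiv{j}$ as given in (\ref{mapderivueq}), noting that $\pcos(j+1,j+2) = \cos\alpha_{j+1}$.

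For the inductive step, I would assume the formula holds for $\deriv{u_{k-1}}{u_j}$ and write $\deriv{u_k}{u_j} = \pderiv{B}{u}(u_{k-1},r_{k-1}) \cdot \deriv{u_{k-1}}{u_j}$. The key observation is bookkeeping: the scalar prefactor picks up an extra factor of $-\sec\alpha_k$, which combines with $\pcos(j+1,k)$ to give $(-1)^{k-j}/\pcos(j+1,k+1)$ and an overall sign flip. The $(2,1)$ and $(2,2)$ entries of the product come out immediately because the bottom row of $\pderiv{B}{u}(u_{k-1},r_{k-1})$ (up to the prefactor) is $\rvec{-\cos\alpha_k}{0}$, which just selects the top row of $\deriv{u_{k-1}}{u_j}$ and rescales by $-\cos\alpha_k$. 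For the $(1,1)$ entry I would compute
\begin{equation*}
(2s_{k-1} + \cos\alpha_{k-1} + \cos\alpha_k)\Gxx(j,k-1) - \cos^2\alpha_{k-1}\Gxx(j,k-2),
\end{equation*}
which is exactly $\Gxx(j,k)$ by (\ref{Gdef}); the $(1,2)$ entry yields the same combination applied to $\Gxx(j+1,\cdot)$, producing $\cos\alpha_j\,\Gxx(j+1,k)$ after factoring out $\cos\alpha_j$.

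I do not anticipate a real obstacle here: the proposition is a direct consequence of the fact that $\Gxx(j,\cdot)$ and $\Gxx(j+1,\cdot)$ satisfy the same second-order linear recursion that appears in the top row of the transfer matrix, with precisely the right initial values to match the base case. The only point requiring care is the index-shifting in $\pcos$ and the tracking of the alternating sign $(-1)^{k-j}$; apart from that the computation is a one-line matrix multiplication per entry.
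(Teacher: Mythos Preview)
Your proposal is correct and follows exactly the same approach as the paper: induction on $k-j$, with the base case $k=j+1$ verified directly from (\ref{mapderivueq}) and the inductive step obtained by left-multiplying by $\pderiv{B}{u}(u_{k-1},r_{k-1})$ and invoking the recursion (\ref{Gdef}). The paper's proof is stated more tersely, but your explicit bookkeeping of the prefactor, the sign, and the four matrix entries is precisely what it omits.
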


\begin{proof}
	The case where $k=j+1$ can be directly verified using (\ref{mapderivueq}). Assume the statement is true for some $j$ and $k$. The inductive result can be proven by computing the product
	$$ \deriv{u_{k+1}}{u_j} = \deriv{u_{k+1}}{u_k} \deriv{u_k}{u_j} $$
	and applying the recursive definition of $G$.
\end{proof}

We now introduce some new definitions to simplify our calculations:
\begin{itemize}
    \item $ \dpr := \Gxx(0,N) - \cos^2 \alpha_0 \Gxx(1,N-1) + 2 \ppr(0,N). $
    \item $ \Gnx(j,k) := \Gxx(j,k) - \cos \alpha_j \Gxx(j+1,k). $
    \item $ \Gxn(j,k) := \Gxx(j,k) - \cos \alpha_k \Gxx(j,k-1). $
    \item $ \Gnx(j,k) := \Gxx(j,k) + \cos \alpha_j \Gxx(j+1,k). $
    \item $ \Gxn(j,k) := \Gxx(j,k) + \cos \alpha_k \Gxx(j,k-1). $
    \item $ \ppr(j,k) := (-1)^{N+1} \pcos(j,k). $
\end{itemize}

\begin{proposition} \label{periodCondSol}
    The solution to (\ref{periodDerivEq}) under the assumption $ r_0'(\gamma) = -(\cos \theta, \sin \theta) $ is
    \begin{equation}
        u_0'(\gamma) = -\frac{2}{\dpr} \mat{\Gxx(1,N)}{\cos \alpha_0 \Gxx(1,N-1) - \ppr(1,N)}{-\cos \alpha_0 \Gxx(1,N-1) + \ppr(1,N)}{-\Gxx(0,N-1)} \cvec{\sin(\alpha_0 + \theta)}{\sin(\alpha_0 - \theta)}.
    \end{equation}
\end{proposition}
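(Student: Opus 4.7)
The plan is to rewrite (\ref{periodDerivEq}) as $\parens{I - \deriv{u_N}{u_0}} u_0'(\gamma) = v(\gamma)$, where
\begin{equation*}
v(\gamma) = \parens{\deriv{u_N}{u_1} \pderiv{B}{r}(u_0,r_0) - \pderiv{B}{r}(u_{N-1},r_{N-1})} r_0'(\gamma),
\end{equation*}
compute each $2\times 2$ matrix explicitly from (\ref{mapderivueq}), (\ref{dBdr}), and Proposition \ref{derivformulaProp}, and then invert $I - \deriv{u_N}{u_0}$ directly.

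\textbf{Right-hand side.} By (\ref{dBdr}), $\pderiv{B}{r}(u_k,r_k)\, r_0'(\gamma)$ has zero second component; substituting $r_0'(\gamma) = -(\cos\theta,\sin\theta)$ and using (\ref{angId}) together with the normalizations $\phi_0 = 0$ and $\alpha_N = \alpha_0$ (so that $\omega_0 = -\alpha_0$ and $\omega_{N-1} = \alpha_0 + \pi$), a direct computation gives
\begin{equation*}
\pderiv{B}{r}(u_0,r_0)\, r_0'(\gamma) = -\tfrac{2\sin(\alpha_0+\theta)}{\cos\alpha_1}\, e_1, \qquad \pderiv{B}{r}(u_{N-1},r_{N-1})\, r_0'(\gamma) = -\tfrac{2\sin(\alpha_0-\theta)}{\cos\alpha_0}\, e_1.
\end{equation*}
Only the first column of $\deriv{u_N}{u_1}$ then contributes, so Proposition \ref{derivformulaProp} with $(j,k)=(1,N)$ gives $v(\gamma)$ as a vector whose components are linear in $\sin(\alpha_0\pm\theta)$ with coefficients explicit in $\Gxx(1,N)$, $\Gxx(1,N-1)$, and $\pcos(1,N)$.

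\textbf{Inverting $I - \deriv{u_N}{u_0}$.} Since $\det\deriv{u_N}{u_0} = \cos\alpha_0/\cos\alpha_N = 1$ (as in the proof of Lemma \ref{Gprodreduce}), one has $\det\parens{I - \deriv{u_N}{u_0}} = 2 - \trace\deriv{u_N}{u_0}$. Substituting from Proposition \ref{derivformulaProp} with $(j,k)=(0,N)$ and using $\ppr(0,N) = (-1)^{N+1}\pcos(0,N)$ to absorb the sign yields $\det\parens{I - \deriv{u_N}{u_0}} = \dpr/\ppr(0,N)$, and the $2\times 2$ adjugate is read off immediately.

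\textbf{Multiplication and simplification.} Forming $u_0'(\gamma) = (I-\deriv{u_N}{u_0})^{-1} v(\gamma)$, the first row exhibits a direct algebraic cancellation: the two $\cos^2\alpha_0\,\Gxx(1,N)\Gxx(1,N-1)\sin(\alpha_0+\theta)$ contributions annihilate one another. The second row requires Lemma \ref{Gprodreduce}: the combination $\Gxx(0,N-1)\Gxx(1,N) - \Gxx(0,N)\Gxx(1,N-1)$ appears and collapses to $\pcos(1,N)^2$, after which the identity $\pcos(0,N) = \cos\alpha_0\,\pcos(1,N)$ factors everything cleanly. The remaining overall scalar prefactor simplifies to $-2/\dpr$ once one tracks that $(-1)^N\cos\alpha_0\,\ppr(1,N)/\pcos(0,N) = -1$, producing the claimed matrix-vector expression. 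The main obstacle is purely bookkeeping: Proposition \ref{derivformulaProp} contributes $(-1)^{k-j}$ factors while the definition of $\ppr$ contributes $(-1)^{N+1}$, and the computation works out cleanly precisely because these parities conspire so that $\ppr$ replaces $\pcos$ uniformly in the final answer.
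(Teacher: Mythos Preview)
Your proposal is correct and follows essentially the same route as the paper: rewrite (\ref{periodDerivEq}) as $u_0'(\gamma)=(I-\deriv{u_N}{u_0})^{-1}v(\gamma)$, compute the pieces from Proposition~\ref{derivformulaProp} and (\ref{dBdr}), and simplify the resulting products with Lemma~\ref{Gprodreduce}. The only tactical differences are that you evaluate $\det(I-\deriv{u_N}{u_0})$ via the trace identity $\det(I-A)=2-\trace A$ (using $\det\deriv{u_N}{u_0}=1$) rather than expanding directly with Lemma~\ref{Gprodreduce}, and you apply $r_0'(\gamma)$ before multiplying by $\deriv{u_N}{u_1}$ so that only its first column enters; both are harmless reorderings of the same computation.
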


\begin{proof}
    (\ref{periodDerivEq}) can be rewritten as
    \begin{equation} \label{ureq}
        u_0'(\gamma) = \parens{I - \deriv{u_N}{u_0}}^{-1} \parens{ \deriv{u_N}{u_1} \pderiv{B}{r}(u_0,r_0) - \pderiv{B}{r}(u_{N-1},r_{N-1}) } r_0'(\gamma),
    \end{equation}
    where $\frac{du_N}{du_0}$ is as defined in (\ref{ukujdef}). By Proposition \ref{derivformulaProp},
    $$ I - \frac{du_N}{du_0} = \frac{1}{\ppr(0,N)} \mat{\Gxx(0,N) + \ppr(0,N)}{\cos \alpha_0 \Gxx(1,N)}{-\cos \alpha_0 \Gxx(0,N-1)}{-\cos^2 \alpha_0 \Gxx(1,N-1) + \ppr(0,N)}. $$
    Using Lemma \ref{Gprodreduce} and the formula for the determinant of a $ 2 \times 2 $ matrix, it can be shown that
    $$ \det \mat{\Gxx(0,N) + \ppr(0,N)}{\cos \alpha_0 \Gxx(1,N)}{-\cos \alpha_0 \Gxx(0,N-1)}{-\cos^2 \alpha_0 \Gxx(1,N-1) + \ppr(0,N)} = \ppr(0,N) \dpr, $$
    so
    $$ \parens{I - \frac{du_N}{du_0}}^{-1} = \frac{1}{\dpr} \mat{-\cos^2 \alpha_0 \Gxx(1,N-1) + \ppr(0,N)}{-\cos \alpha_0 \Gxx(1,N)}{\cos \alpha_0 \Gxx(0,N-1)}{G(0,N) + \ppr(0,N)}. $$
    
    By (\ref{angId}) and the assumption that $ \phi_0 = 0 $, $ \omega_0 = \alpha_0 $ and $ \omega_{N-1} = \pi + \alpha_0 $. Using these values, (\ref{dBdr}), and Proposition \ref{derivformulaProp},
    $$ \deriv{u_N}{u_1} \pderiv{B}{r}(u_0,r_0) - \pderiv{B}{r}(u_{N-1},r_{N-1}) = \frac{2}{\ppr(0,N)} \mat{\Gxx(1,N) - \ppr(1,N)}{\Gxx(1,N) + \ppr(1,N)}{-\cos \alpha_0 \Gxx(1,N-1)}{-\cos \alpha_0 \Gxx(1,N-1)} \mat{\sin \alpha_0}{0}{0}{\cos \alpha_0}. $$
    
    Multiplying the above two values, using Lemma \ref{Gprodreduce}, and taking a factor of $ \cos \alpha_0 $ out of $\ppr(0,N)$ where appropriate yields
     \begin{equation} \label{u0intermsofr0}
        u_0'(\gamma) = \frac{2}{\dpr} \mat{\Gxp(1,N) - \ppr(1,N)}{\Gxn(1,N) + \ppr(1,N)}{-\Gpx(0,N-1) + \ppr(1,N)}{\Gnx(0,N-1) + \ppr(1,N)} \mat{\sin \alpha_0}{0}{0}{\cos \alpha_0} r_0'(\gamma).
    \end{equation}
    We assume that $ r_0'(\gamma_0) = -(\cos \theta, \sin \theta) $, so applying the identity
    $$ \mat{\sin \alpha_0}{0}{0}{\cos \alpha_0} r_0'(\gamma) = \frac{1}{2} \mat{1}{1}{1}{-1} \cvec{\sin(\alpha_0 + \theta)}{\sin(\alpha_0 - \theta)}, $$
    to (\ref{u0intermsofr0}) and simplifying completes the proof.
\end{proof}

\begin{proposition} \label{l0values}
    The values $ \ell_0^+(\gamma) $ and $ \ell_0^-(\gamma) $, as defined in (\ref{lkdef}), are
    \begin{equation} \label{l0plus}
        \ell_0^+(\gamma) = \frac{\parens{\Gnx(0,N) + \ppr(0,N)} \sin(\alpha_0 + \theta) + \parens{\cos \alpha_0 \Gnx(0,N-1) + \ppr(0,N)} \sin(\alpha_0 - \theta)}{\dpr}
    \end{equation}
    and
    \begin{equation} \label{l0minus}
        \ell_0^-(\gamma) = \frac{\parens{\cos \alpha_0 \Gxn(1,N) + \ppr(0,N)} \sin(\alpha_0 + \theta) + \parens{\Gxn(0,N) + \ppr(0,N)} \sin(\alpha_0 - \theta)}{\dpr}.
    \end{equation}
\end{proposition}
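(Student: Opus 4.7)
The approach is to first express $\ell_0^{\pm}(\gamma)$ geometrically as signed projections of $dQ_0/d\gamma$ onto the normals of segments $0$ and $N-1$ at $Q_0$, then substitute the formula for $u_0'(\gamma)$ provided by Proposition \ref{periodCondSol}, and finally reduce the resulting combinations of $G$'s to the prescribed $\Gnx, \Gxn$ forms using the definition of $\dpr$ and a couple of elementary identities among $\pcos$, $\ppr$, and $G$.

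Since $Q_0 = C_0 + (\cos\phi_0, \sin\phi_0)$ and $\phi_0 = 0$, differentiation gives $dQ_0/d\gamma = (\cos\theta, \sin\theta) + \phi_0'(\gamma)(0,1)$. The unit normals to segments $0$ and $N-1$ at $Q_0$ are $(-\sin\omega_0, \cos\omega_0)$ and $(-\sin\omega_{N-1}, \cos\omega_{N-1})$; substituting $\omega_0$ and $\omega_{N-1}$ via (\ref{angId}) with $\phi_0 = 0$ and projecting (with the sign convention consistent with (\ref{lfixed})) yields
\begin{align*}
\ell_0^+(\gamma) &= \sin(\alpha_0+\theta) + \cos\alpha_0\cdot\phi_0'(\gamma), \\
\ell_0^-(\gamma) &= \sin(\alpha_0-\theta) - \cos\alpha_0\cdot\phi_0'(\gamma).
\end{align*}
These are the natural generalizations of (\ref{lfixed}) that account for the motion of scatterer $0$.

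To obtain $\phi_0'(\gamma)$ I would use that $u_0 = (\phi_0 - \alpha_0,\, \phi_0 + \alpha_0 + \pi)$, so $\phi_0'(\gamma) = \tfrac{1}{2}\bigl((u_0')_1 + (u_0')_2\bigr)$. Summing the two rows of the matrix in Proposition \ref{periodCondSol} and invoking the periodicity $\alpha_N = \alpha_0$ to recognize $G(1,N) - \cos\alpha_0\, G(1,N-1) = \Gxn(1,N)$ and $G(0,N-1) - \cos\alpha_0\, G(1,N-1) = \Gnx(0,N-1)$ expresses $\cos\alpha_0\cdot\phi_0'(\gamma)$ as an explicit linear combination of $\sin(\alpha_0+\theta)$ and $\sin(\alpha_0-\theta)$ with denominator $\dpr$.

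The final step is algebraic simplification. Substituting into the formulas above and clearing $\dpr$, each of the four coefficients of $\sin(\alpha_0\pm\theta)$ is a linear combination of $G(0,N)$, $G(1,N)$, $G(0,N-1)$, $G(1,N-1)$, $\ppr(0,N)$, and $\ppr(1,N)$. Expanding $\dpr = G(0,N) - \cos^2\alpha_0\, G(1,N-1) + 2\ppr(0,N)$ and using the single identity $\ppr(0,N) = \cos\alpha_0 \cdot \ppr(1,N)$ (immediate from the definition of $\pcos$), the $\cos^2\alpha_0\, G(1,N-1)$ terms telescope out and each coefficient collapses onto the correct combination of $\Gnx$, $\Gxn$, and $\ppr(0,N)$ announced in (\ref{l0plus}) and (\ref{l0minus}). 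I expect the main obstacle to be purely bookkeeping: keeping straight which of $\Gnx$ or $\Gxn$ (and at which arguments) appears in each of the four coefficients, and applying the $\ppr(0,N)/\ppr(1,N)$ conversion consistently so that the announced cancellations go through on both the $\ell_0^+$ and $\ell_0^-$ sides.
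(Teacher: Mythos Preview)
Your proposal is correct and is essentially the paper's own argument. The paper writes $\ell_0^+ = A\,u_0'(\gamma) + B\,r_0'(\gamma)$ with $A=\tfrac{\cos\alpha_0}{2}\rvec{1}{1}$ and $B\,r_0'(\gamma)=\sin(\alpha_0+\theta)$, which is exactly your formula $\ell_0^+=\sin(\alpha_0+\theta)+\cos\alpha_0\,\phi_0'(\gamma)$ obtained by differentiating $Q_0=C_0+(\cos\phi_0,\sin\phi_0)$ and projecting; the remaining substitution of Proposition~\ref{periodCondSol} and the algebraic reduction via $\dpr$ and $\ppr(0,N)=\cos\alpha_0\,\ppr(1,N)$ are identical.
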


\begin{proof}
    We only prove the result for $\ell_0^+$, as the proof for $\ell_0^-$ is similar.
    
    Note that in the beginning of Section \ref{SecAnal}, $\ell_0^+$ is defined to be the limit of the derivatives $ \sigma'_P(\gamma) $, where $ P \rightarrow Q_0^+ $. By considering the trajectory itself as a function of $r_0$ and $u_0$ for a fixed $P$, we can write
    $$ \sigma'_P(\gamma) = \pderiv{\sigma_P}{u_0} u_0'(\gamma) + \pderiv{\sigma_P}{r_0} r_0'(\gamma). $$
    This implies that we can compute $\ell_0^+$ by a similar decomposition into partial derivatives of the form
    \begin{equation} \label{l0pAB}
        \ell_0^+ = A u_0'(\gamma) + B r_0'(\gamma),
    \end{equation}
    and compute $A$ and $B$ by considering the effect of changes in $u_0$ and $r_0$, respectively, on the trajectory near $Q_0$.
    
    In the case where $r_0$ is constant, $\ell_0^+$ can be calculated by (\ref{lfixed}). Applying (\ref{angId}) and the identity $ dQ_0 = (0, d\phi_0) $, we have
    $$ A = \rvec{\sin \alpha_0}{\cos \alpha_0} \pderiv{Q_0}{u_0} = \frac{\cos \alpha_0}{2} \rvec{1}{1}. $$

    If $u_0$ is constant, then the trajectory near $Q_0$ moves only in translation with $C_0$, as shown in Figure \figlzero. Therefore, $B$ is a projection of $ C_0'(\gamma) $ onto the unit normal $ (\sin \omega_0, -\cos \omega_0) $. Since $ r_0'(\gamma) = -C_0'(\gamma) $ and $ \alpha_0 = -\omega_0 $, this means
    $$ B = \rvec{-\sin \alpha_0}{-\cos \alpha_0}. $$
    Using the result $ r_0'(\gamma) = -(\cos \theta, \sin \theta) $, we have $ B r_0'(\gamma) = \sin(\alpha_0 + \theta) $. Applying the values of $A$, $B$, and $u'_0(\gamma)$ as computed in Proposition \ref{periodCondSol} to (\ref{l0pAB}) completes the proof.
\end{proof}

\begin{figure} \label{figl0}
  \includegraphics[width=84mm]{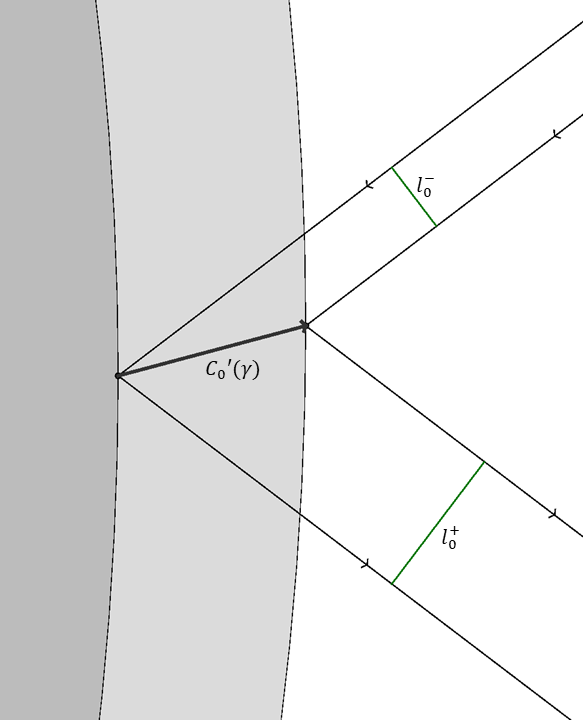}
\caption{If $u_0$ does not change with $\gamma$, then the trajectory near scatterer $0$ moves only in translation with the scatterer. Therefore, $ dC_0 = dQ_0 $, and the values $\ell_0^-$ and $\ell_0^+$ can be computed simply as projections of $dC_0$ onto the normal vectors to the respective segments of the trajectory.}
\end{figure}

We now present some inequalities that we will use to bound the appropriate quantities:
\begin{lemma} \label{Gineq}
    For any $j$ and $k$ where $ j < k $,
    \begin{enumerate}[(a)]
        \item $ \parens{2s_{k-1} + \cos \alpha_k} \Gxx(j,k-1) < \Gxx(j,k) < \parens{2s_{k-1} + \cos \alpha_{k-1} + \cos \alpha_k} \Gxx(j,k-1) $
        \item $ \cos \alpha_k \Gxx(j,k-1) < \Gxx(j,k) $
        \item $ \Gxx(j,k) > \pcos(j+1,k+1) $
        \item $ \parens{2s_j + \cos \alpha_j} \Gxx(j+1,k) < \Gxx(j,k) < \parens{2s_j + \cos \alpha_j + \cos \alpha_{j+1}} \Gxx(j+1,k) $
        \item $ \cos \alpha_j \Gxx(j+1,k) < \Gxx(j,k) $
        \item $ \Gxx(j,k) > \pcos(j,k) $
    \end{enumerate}
\end{lemma}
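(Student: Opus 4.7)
The plan is to prove all six inequalities simultaneously by strong induction on $k-j$, exploiting the fact that items (a)--(c) and (d)--(f) are mirror images of one another under the two recursions available for $G$: the defining one in \eqref{Gdef} (which decreases $k$) and the alternative one in \eqref{Gdefflip} (which increases $j$). Within each half, I expect (a) (resp.\ (d)) to be the heart of the argument, with (b) and (c) (resp.\ (e) and (f)) falling out as easy consequences. Positivity of $G$ will be maintained along the way via (c) or (f), since $P(j,k)>0$ for non-grazing collisions.

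For the base case $k=j+1$, a direct evaluation gives $G(j,j+1) = 2s_j+\cos\alpha_j+\cos\alpha_{j+1}$, from which (b), (c), (e), (f) are checked by inspection (using $s_j>0$ and $|\alpha_i|<\pi/2$), and the strict lower bounds in (a) and (d) reduce to $\cos\alpha_j>0$ and $\cos\alpha_{j+1}>0$. The upper bounds in (a) and (d) collapse to equality in this base case because $G(j,j-1)=0$ and $G(j+2,j+1)=0$; I will either state (a) and (d) with non-strict upper bounds or restrict to $k\ge j+2$ when claiming strictness.

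For the inductive step ($k\ge j+2$), assume the lemma holds for all index pairs with smaller value of $k-j$. To obtain (a), rearrange the defining recursion: the strict upper bound is immediate because $\cos^2\alpha_{k-1}\,G(j,k-2)>0$ by induction (via (c)); for the strict lower bound, compute
\[
G(j,k)-(2s_{k-1}+\cos\alpha_k)G(j,k-1) \;=\; \cos\alpha_{k-1}\bigl[G(j,k-1)-\cos\alpha_{k-1}\,G(j,k-2)\bigr],
\]
which is positive by (b) at $(j,k-1)$. Then (b) at $(j,k)$ follows from the lower bound of (a) together with $s_{k-1}>0$, and (c) at $(j,k)$ is obtained by chaining: $G(j,k)>\cos\alpha_k\,G(j,k-1)>\cos\alpha_k\,P(j+1,k)=P(j+1,k+1)$, using (b) at $(j,k)$ and (c) at $(j,k-1)$.

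Finally, (d), (e), (f) are proved by the identical argument with the roles of the endpoints swapped, using \eqref{Gdefflip} in place of \eqref{Gdef}. Concretely, the analogue of the key identity is
\[
G(j,k)-(2s_j+\cos\alpha_j)G(j+1,k) \;=\; \cos\alpha_{j+1}\bigl[G(j+1,k)-\cos\alpha_{j+1}\,G(j+2,k)\bigr],
\]
which is positive by (e) at $(j+1,k)$; the derivations of (e) and (f) from (d) mirror those of (b) and (c) from (a). I do not anticipate any serious obstacle: the only genuine subtlety is keeping the strict-versus-non-strict distinction consistent in the base case, and being careful that the inductive chain for (a)--(c) draws only on inequalities at strictly smaller $k-j$ (and likewise for (d)--(f)), which it does.
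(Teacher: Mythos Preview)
Your proposal is correct and matches the paper's approach almost exactly: the paper also argues (a)--(b) by induction from the defining recursion \eqref{Gdef}, obtains (c) by iterated application of (b), and then transfers to (d)--(f) via the flipped recursion \eqref{Gdefflip}. Your observation that the upper bounds in (a) and (d) degenerate to equality at $k=j+1$ is accurate and is a detail the paper glosses over; in practice the upper bounds are only invoked for $k\ge j+2$, so no harm results, but your care in flagging it is warranted.
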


\begin{proof}
    (b) is simply a stronger condition than (a), and (a) and (b) can both be proven inductively from the recursive definition. (c) follows from repeated application of (b). (d)-(f) are related to (a)-(c) by the billiard involution, and can be similarly proven using (\ref{Gdefflip}).
\end{proof}

\begin{proposition}
    The quantities $ \abs{\ell_0^+} $ and $ \abs{\ell_0^-} $ are both less than
    $$ 1 - \frac{m \cos \alpha_0}{(M+1)(2m+1)}, $$
    where $M$ is the maximum distance between the centers of any two scatterers and $m$ is the minimum separation between any two scatterers.
\end{proposition}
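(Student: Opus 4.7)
The strategy is to begin with Proposition~\ref{l0values}, which expresses each of $\ell_0^{\pm}$ as $[A\sin(\alpha_0+\theta)+B\sin(\alpha_0-\theta)]/\dpr$ for explicit coefficients $A,B$. Applying the identity $A\sin(\alpha_0+\theta)+B\sin(\alpha_0-\theta)=(A+B)\sin\alpha_0\cos\theta+(A-B)\cos\alpha_0\sin\theta$, the squared maximum over $\theta$ equals $A^2+B^2-2AB\cos(2\alpha_0)$. The hypothesis $|\alpha_0|<\frac{\pi}{6}$ forces $\cos(2\alpha_0)\geq\frac{1}{2}$, so when $A,B$ have the same sign and satisfy $A\geq B\geq 0$, one has $A^2+B^2-2AB\cos(2\alpha_0)\leq A^2+B^2-AB\leq A^2$. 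This reduces bounding $|\ell_0^+|$ to proving $A/\dpr\leq 1-\frac{m\cos\alpha_0}{(M+1)(2m+1)}$ for $A=\Gnx(0,N)+\ppr(0,N)$; direct computation shows the coefficients of $\ell_0^-$ are $\dpr-A$ and $\dpr-B$, so its bound reduces symmetrically to $B/\dpr\geq\frac{m\cos\alpha_0}{(M+1)(2m+1)}$ for $B=\cos\alpha_0\Gnx(0,N-1)+\ppr(0,N)$.

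In the principal case $\ppr(0,N)\geq 0$, both $A,B$ are positive and the ordering $A\geq B$ follows from the identity $A-B=\Gxn(0,N)-\cos\alpha_0\Gxn(1,N)=(2s_0+\cos\alpha_1)\Gxn(1,N)-\cos^2\alpha_1\Gxn(2,N)>2s_0\Gxn(1,N)>0$, using that $\Gxn$ obeys the analog of Lemma~\ref{Gineq}(d) in its first argument. By direct computation $\dpr-A=\cos\alpha_0\Gxn(1,N)+\ppr(0,N)$, and the refined recursion $\Gxn(1,N)=(2s_{N-1}+\cos\alpha_{N-1})\Gxx(1,N-1)-\cos^2\alpha_{N-1}\Gxx(1,N-2)>2s_{N-1}\Gxx(1,N-1)\geq 2m\Gxx(1,N-1)$ gives $\dpr-A\geq 2m\cos\alpha_0\Gxx(1,N-1)$; a parallel expansion of $\Gnx(0,N-1)$ yields $B\geq 2m\cos\alpha_0\Gxx(1,N-1)$. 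When $\ppr(0,N)<0$ (i.e., $N$ even), the iterated consequence $\Gxx(0,N)>(2m+1)^N\pcos(0,N)$ of Lemma~\ref{Gineq}(d) and (f) bounds $|\ppr(0,N)|$ by a small fraction of $\Gxx(0,N)$, preserving both inequalities.

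The final, and most technical, step is the upper bound $\dpr\leq 2(M+1)(2m+1)\Gxx(1,N-1)$. Using Lemma~\ref{Gpart} to factor $\Gxx(0,N)=\Gxx(0,1)\Gxx(1,N)-\cos^2\alpha_1\Gxx(2,N)$ and expanding $\Gxx(1,N)$ via (\ref{Gdef}), the coefficient of $\Gxx(1,N-1)$ in $\dpr$ becomes $(2s_0+\cos\alpha_0+\cos\alpha_1)(2s_{N-1}+\cos\alpha_{N-1}+\cos\alpha_0)-\cos^2\alpha_0$ once the $-\cos^2\alpha_0\Gxx(1,N-1)$ term of $\dpr$ is absorbed, while the remaining $\Gxx(1,N-2)$, $\Gxx(2,N)$, and $\ppr$ contributions are of lower order or are controllable by Lemma~\ref{Gineq}(f). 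The main obstacle is obtaining the factor $(M+1)(2m+1)$ rather than the naive $(M+1)^2$: this requires exploiting the subtracted $\cos^2\alpha_0$ together with the constraint that the segment lengths $s_0,s_{N-1}$ adjacent to scatterer $0$ satisfy $s_0,s_{N-1}\geq m$, so that the cross terms collapse to the desired form. Once established, this upper bound combines with the previous lower bounds to yield $(\dpr-A)/\dpr\geq\frac{m\cos\alpha_0}{(M+1)(2m+1)}$ and $B/\dpr\geq\frac{m\cos\alpha_0}{(M+1)(2m+1)}$, completing both inequalities.
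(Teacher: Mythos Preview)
Your reduction via the maximum over $\theta$ is a nice idea and is in fact more careful than the paper's own argument on that point (the paper simply asserts that bounding each of the four coefficients by $1-c$ suffices, without addressing how the two sines combine). Note however that the inequality $\cos(2\alpha_0)\geq\tfrac12$ is not a hypothesis of this proposition; you are silently importing $|\alpha_0|<\tfrac{\pi}{6}$ from Theorem~\ref{mainTh}.

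The genuine gap is your ``most technical step'', the claimed upper bound
\[
\dpr \;\leq\; 2(M+1)(2m+1)\,G(1,N-1).
\]
This inequality is not true in general, and your sketch cannot be completed. An upper bound on $\dpr$ in terms of $G(1,N-1)$ comes from the chain
\[
\dpr < G(0,N)+2P'(0,N)\;<\;(2s_{N-1}+\cos\alpha_{N-1}+\cos\alpha_0)\,(2s_0+\cos\alpha_0+\cos\alpha_1)\,G(1,N-1)+\cdots,
\]
and the leading factor is of order $(2M+2)^2=4(M+1)^2$, \emph{not} $2(M+1)(2m+1)$. If the system contains a pair of nearby scatterers not lying on segments $0$ or $N-1$ (so that $m$ is small while $s_0,s_{N-1}$ are of order $M$), then $\dpr/G(1,N-1)$ is genuinely of size $4(M+1)^2$ and your bound fails. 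Lower bounds $s_0,s_{N-1}\geq m$ cannot help you \emph{upper}-bound $\dpr$; they point in the wrong direction.

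The way the paper obtains the factor $(2m+1)$ is different and worth noting: it never upper-bounds $\dpr$ globally. Instead it bounds the ratio
\[
\frac{\cos\alpha_0\,G_-(0,N-1)}{G(0,N)}
\;=\;\cos\alpha_0\cdot\frac{G_-(0,N-1)}{G(0,N-1)}\cdot\frac{G(0,N-1)}{G(0,N)}
\;>\;\cos\alpha_0\cdot\frac{2s_0}{2s_0+\cos\alpha_0}\cdot\frac{1}{2M+2}
\;\geq\;\frac{m\cos\alpha_0}{(M+1)(2m+1)},
\]
so the $m$ enters through the \emph{lower} bound $s_0\geq m$ applied to the ratio $G_-(0,N-1)/G(0,N-1)=2s_0/(2s_0+\cos\alpha_0)$ (Lemma~\ref{Gineq}(d)), not through any estimate on $\dpr$ itself. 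Your lower bounds $\dpr-A\geq 2m\cos\alpha_0\,G(1,N-1)$ and $B\geq 2m\cos\alpha_0\,G(1,N-1)$ are correct, but paired only with the achievable upper bound $\dpr\lesssim 4(M+1)^2 G(1,N-1)$ they yield the weaker constant $\tfrac{m\cos\alpha_0}{2(M+1)^2}$, which suffices for Theorem~\ref{mainTh} but not for the proposition as stated.
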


\begin{proof}
    Note that the lengths of all segments $s_i$ satisfy $ m \leq s_i \leq M $.

    By the formulas given in Proposition \ref{l0values}, it is sufficient to prove that the values
    $$ \frac{\Gnx(0,N) + \ppr(0,N)}{\dpr}, \frac{\cos \alpha_0 \Gnx(0,N-1) + \ppr(0,N)}{\dpr}, \frac{\Gxn(0,N) + \ppr(0,N)}{\dpr}, \text{ and } \frac{\cos \alpha_0 \Gxn(1,N) + \ppr(0,N)}{\dpr} $$
    are all less than $ 1 - \frac{\cos \alpha_0}{2M + 1 + \cos \alpha_0} $. We will only prove that
    $$ \frac{\Gnx(0,N) + \ppr(0,N)}{\dpr} < 1 - \frac{\cos \alpha_0}{2M + 1 + \alpha_0}, $$
    as the proof for $ \frac{\Gxn(0,N) + \ppr(0,N)}{\dpr} $ is similar and the others directly follow from Lemma \ref{Gineq}(b) and Lemma \ref{Gineq}(e).
    
    Notice that $ D < \Gxx(0,N) + 2\ppr(0,N) $, so by expanding $D$ and $\Gnx$ in the numerator,
    \begin{equation} \begin{split}
        1 - \frac{\Gnx(0,N) + \ppr(0,N)}{D} & = \frac{\cos \alpha_0 \Gnx(0,N-1) + \ppr(0,N)}{D} \\
        & > \frac{\cos \alpha_0 \Gnx(0,N-1) + \ppr(0,N)}{\Gxx(0,N) + 2\ppr(0,N)} \geq \min \set{\frac{1}{2}, \frac{\cos \alpha_0 \Gnx(0,N-1)}{\Gnx(0,N)}}.
    \end{split} \end{equation}
    By Lemma \ref{Gineq}(a),
    $$ \frac{\Gxx(0,N-1)}{\Gxx(0,N)} > \frac{1}{2s_{N-1} + \cos \alpha_{N-1} + \cos \alpha_0} \geq \frac{1}{2M+2} \text{ and } \frac{\Gnx(0,N-1)}{\Gxx(0,N-1} > \frac{2s_0}{2s_0 + \cos \alpha_0} \geq \frac{2m}{2m+1}. $$
    Therefore,
    $$ \frac{\cos \alpha_0 \Gnx(0,N-1)}{\Gnx(0,N)} > \frac{\cos \Gnx(0,N-1)}{\Gxx(0,N)} > \frac{m \cos \alpha_0}{(M+1)(2m+1)}, $$
    which completes the proof.
\end{proof}

\begin{proposition}
    If the minimum separation between any two scatterers is $m$, then
    $$ \abs{\alpha_0'(\gamma)} < \frac{3}{m}. $$
\end{proposition}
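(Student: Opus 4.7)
The plan is to compute $\alpha_0'(\gamma)$ directly from Proposition \ref{periodCondSol}. From the identities (\ref{angId}) applied at $k=0$, $\omega_0 = \phi_0 - \alpha_0$ and $\omega_{-1} = \phi_0 + \alpha_0 + \pi$, so $2\alpha_0 = \omega_{-1} - \omega_0 - \pi$. Since $u_0 = (\omega_0, \omega_{-1})$ with $\omega_{-1}$ identified with $\omega_{N-1}$ by periodicity, this gives $\alpha_0'(\gamma) = \tfrac{1}{2}\bigl[(u_0'(\gamma))_2 - (u_0'(\gamma))_1\bigr]$. Substituting the matrix formula from Proposition \ref{periodCondSol}, taking the difference of rows, and using $\alpha_N = \alpha_0$, the combinations $\Gxp(1,N) = \Gxx(1,N) + \cos\alpha_0\,\Gxx(1,N-1)$ and $\Gpx(0,N-1) = \Gxx(0,N-1) + \cos\alpha_0\,\Gxx(1,N-1)$ appear naturally, producing
\begin{equation*}
\alpha_0'(\gamma) = \frac{(\Gxp(1,N) - \ppr(1,N))\sin(\alpha_0+\theta) + (\Gpx(0,N-1) - \ppr(1,N))\sin(\alpha_0-\theta)}{\dpr}.
\end{equation*}

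The remaining work is to bound this expression using Lemma \ref{Gineq}. I would estimate each $|\sin(\cdot)| \leq 1$, then split the numerator into pieces and match each against a complementary piece of $\dpr$. Concretely, Lemma \ref{Gineq}(d) combined with (b) yields $\Gxx(0,N) - \cos^2\alpha_0\,\Gxx(1,N-1) > 2s_0\,\Gxx(1,N)$, while Lemma \ref{Gineq}(a) combined with (e) yields $\Gxx(0,N) - \cos^2\alpha_0\,\Gxx(1,N-1) > 2s_{N-1}\,\Gxx(0,N-1)$. After absorbing $|2\ppr(0,N)| \leq 2\cos\alpha_0\,\Gxx(1,N-1)$, the ratio $\Gxp(1,N)/|\dpr|$ should be bounded by a constant times $1/s_0$ and $\Gpx(0,N-1)/|\dpr|$ by a constant times $1/s_{N-1}$, each of which is at most $1/m$. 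The $\ppr(1,N)/|\dpr|$ contribution is controlled analogously using Lemma \ref{Gineq}(f), which bounds $\ppr(1,N)$ by the relevant $\Gxx$. Summing these contributions and bookkeeping the constants yields $|\alpha_0'(\gamma)| < 3/m$.

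The main obstacle is that $m$ need not be at least $1$, so the naive approach of first lower-bounding $|\dpr|$ globally and then dividing the full numerator by it can collapse to order $m^2$ or even fail to be positive when $s_0, s_{N-1}$ are both close to $m$ and $\cos\alpha_0$ is not small. The resolution is the term-by-term matching described above: rather than seeking a single global lower bound on $|\dpr|$, each numerator term is paired with the piece of $\Gxx(0,N) - \cos^2\alpha_0\,\Gxx(1,N-1)$ that shares its factor of $\Gxx$, leaving the surviving positive parts of $\dpr$ to absorb the correction from $2\ppr(0,N)$. Verifying that the three resulting ratios sum to strictly less than $3/m$ (not just $O(1/m)$) is the delicate part of the computation.
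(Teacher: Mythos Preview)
Your derivation of the formula for $\alpha_0'(\gamma)$ and the overall strategy---bound each numerator piece against a matching piece of $\dpr$---are exactly what the paper does. The paper then bounds each of the six terms $\Gxx(1,N)$, $\Gxx(0,N-1)$, $\cos\alpha_0\Gxx(1,N-1)$ (twice), and $|\ppr(1,N)|$ (twice) by $\tfrac{1}{2m}\dpr$, giving $3/m$ on the nose.

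The one place where your sketch diverges is the absorption of $2\ppr(0,N)$. The inequality $|2\ppr(0,N)|\le 2\cos\alpha_0\,\Gxx(1,N-1)$ that you propose is true but too crude: subtracting it from $\Gxx(0,N)-\cos^2\alpha_0\Gxx(1,N-1)>2s_0\Gxx(1,N)$ leaves only $(2s_0-2)\Gxx(1,N)$, which is useless when $m<1$. The paper instead expands $\Gxx(0,N)$ via the recursion to obtain the \emph{identity}
\[
\dpr = 2s_{N-1}\Gxx(0,N-1) + \cos\alpha_0\,\Gnx(0,N-1) + \cos\alpha_{N-1}\,\Gxn(0,N-1) + 2\ppr(0,N),
\]
and then checks that $\cos\alpha_0\,\Gnx(0,N-1)$ and $\cos\alpha_{N-1}\,\Gxn(0,N-1)$ are each greater than $\pcos(0,N)=|\ppr(0,N)|$. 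That pair of inequalities is what cleanly absorbs $2\ppr(0,N)$ and yields $\dpr>2s_{N-1}\Gxx(0,N-1)$ (and symmetrically $\dpr>2s_0\Gxx(1,N)$) without any loss. Your ``surviving positive parts'' language points at exactly this mechanism, but the specific absorbing bound you wrote down is not the one that works; you need the $\Gnx$, $\Gxn$ terms from the recursive expansion rather than the bare $\cos\alpha_0\Gxx(1,N-1)$.
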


\begin{proof}
    Using the result of Proposition \ref{periodCondSol},
    $$ \alpha_0'(\gamma) = \rvec{\frac{1}{2}}{-\frac{1}{2}} u_0'(\gamma) = -\frac{1}{D} \cvec{\Gxx(1,N) + \cos \alpha_0 \Gxx(1,N-1) - \ppr(1,N)}{\Gxx(0,N-1) + \cos \alpha_0 \Gxx(1,N-1) - \ppr(1,N)} \cdot \cvec{\sin(\alpha_0 + \theta)}{\sin(\alpha_0 - \theta)}. $$
    
    By expanding $\dpr,$ using the recursive relation for $\Gxx(0,N),$ and combining terms,
    \begin{equation} \begin{split}
        \dpr & = (2s_{N-1} + \cos \alpha_{N-1} + \cos \alpha_0) \Gxx(0,N-1) - \cos^2 \alpha_{N-1} \Gxx(0,N-2) - \cos^2 \alpha_0 \Gxx(1,N-1) + 2 \ppr(0,N) \\
        & = 2s_{N-1} \Gxx(0,N-1) + \cos \alpha_0 \Gnx(0,N-1) + \cos \alpha_{N-1} \Gxn(0,N-1) + 2 \ppr(0,N).
    \end{split} \end{equation}
    Since $ \cos \alpha_0 \Gnx(0,N-1) $ and $ \cos \alpha_{N-1} \Gxn(0,N-1) $ are both greater than $ \pcos(0,N) $ and $ \ppr(0,N) \geq -\pcos(0,N) $, this means that $ \dpr > 2s_{N-1} \Gxx(0,N-1) $, so
    $$ \frac{\Gxx(0,N-1)}{\dpr} < \frac{1}{2s_{N-1}} \leq \frac{1}{2m}. $$
    
    It can be similarly shown that $ \frac{\Gxx(1,N)}{\dpr} < \frac{1}{2s_0} \leq \frac{1}{2m} $, and by Lemma \ref{Gineq}(b), $ \frac{\cos \alpha_0 \Gxx(1,N-1)}{\dpr} < \frac{1}{2m} $. By Lemma \ref{Gineq}(f), $ \pcos(1,N) = \abs{\ppr(1,N)} < \Gxx(1,N) $, so $ \frac{\abs{\ppr(1,N)}}{\dpr} < \frac{1}{2m} $. These bounds can be combined to yield $ \alpha_0'(\gamma) < \frac{3}{m} $.
\end{proof}

\section{Acknowledgements}

I would like to thank Dr. Oleg Makarenkov for his assistance in this project.

\end{document}